\date{\today}
\theoremstyle{definition}
\numberwithin{equation}{section}
\theoremstyle{plain}
\newtheorem{theorem}{Theorem}[section]
\newtheorem{proposition}[theorem]{Proposition}
\newtheorem{corollary}[theorem]{Corollary}
\newtheorem{lemma}[theorem]{Lemma}
\theoremstyle{remark}
\newtheorem{remarks}[theorem]{Remarks}
\newtheorem{example}[theorem]{Example}
\newtheorem{problems}[theorem]{Problems}
\newtheorem{clm}[theorem]{Claim}
\newcommand{\BMOA}{\mathit{BMOA}}
\newcommand{\VMOA}{\mathit{VMOA}}
\newcommand{\UnitDisk}{\mathbb{D}}
\newcommand{\T}{\mathbb{T}}
\renewcommand{\Re}{\operatorname{Re}}
\title[Rigidity of composition operators]{Rigidity of composition operators on 
the Hardy space $H^p$}
\author{Jussi Laitila}
\address{Jussi Laitila, Department of Mathematics and Statistics, P.O.\ Box 68, FI-00014 University of Helsinki, Helsinki, Finland}
\email{jussi.laitila@helsinki.fi}
\author{Pekka J. Nieminen}
\address{Pekka J. Nieminen, Department of Mathematics and Statistics, P.O.\ Box 68, FI-00014 University of Helsinki, Helsinki, Finland}
\email{pjniemin@cc.helsinki.fi}
\author{Eero Saksman}
\address{Eero Saksman, Department of Mathematics and Statistics, P.O.\ Box 68, FI-00014 University of Helsinki, Helsinki, Finland}
\email{eero.saksman@helsinki.fi}
\author{Hans-Olav Tylli}
\address{Hans-Olav Tylli, Department of Mathematics and Statistics, P.O.\ Box 68, FI-00014 University of Helsinki, Helsinki, Finland}
\email{hans-olav.tylli@helsinki.fi}
\subjclass[2010]{47B33, 47B10}
\keywords{Hardy space, composition operator, $\ell^p$-singularity, $\ell^2$-singularity}
\begin{document}

\begin{abstract}

Let $\phi$ be an analytic map taking the unit disk $\UnitDisk$  into itself. 
We establish that the class of composition operators
$f \mapsto C_\phi(f) = f \circ \phi$ exhibits a rather strong rigidity of
non-compact behaviour on the Hardy space $H^p$, for $1\le p < \infty$ 
and $p \neq 2$. Our main result is the following trichotomy, which states that 
exactly one of the following alternatives holds: 
 (i)  $C_\phi$ is a compact operator $H^p \to H^p$,
 (ii) $C_\phi$ fixes a (linearly isomorphic) copy of $\ell^p$ in $H^p$, but  
 $C_\phi$ does not fix any copies of $\ell^2$ in $H^p$, 
 (iii) $C_\phi$ fixes a copy of $\ell^2$ in $H^p$. 
Moreover, in case (iii) the operator $C_\phi$ 
actually fixes a copy of $L^p(0,1)$ in $H^p$ provided $p > 1$.
We reinterpret these results in terms of 
 norm-closed ideals of the bounded linear operators on $H^p$, which 
 contain the compact operators $\mathcal K(H^p)$.
In particular, the class of composition operators on $H^p$ does not reflect  
 the quite complicated lattice structure of such ideals.
\end{abstract}

\maketitle

\section{Introduction and preliminaries}\label{intro}

Let $\mathbb D = \{z \in \mathbb C: \vert z\vert < 1\}$ be the unit disk in 
$\mathbb C$. For $0 <  p < \infty$ the analytic function 
$f: \mathbb D \to \mathbb C$ belongs to the Hardy space $H^p$ if
 \begin{equation}\label{Hnorm}
 \Vert f\Vert^p_{p} = \sup_{0 \le r < 1} \int_{\mathbb  T}  \vert f(r\xi)\vert^p dm(\xi) < \infty,
 \end{equation}
where $\mathbb T = \partial \mathbb D$ 
(identified with $[0,2\pi]$) and  $dm(e^{it}) = \frac{dt}{2\pi}$. 
Let  $\phi:  \UnitDisk \to \UnitDisk$ be an analytic self-map of $\UnitDisk$. 
It is a well-known consequence of the Littlewood subordination principle, 
see e.g. \cite[3.1]{CM}, that   the  composition operator  
\[
f \mapsto C_\phi(f) = f \circ \phi
\]
is bounded $H^p \to H^p$ for any $\phi$ as above. 
Properties of these composition operators have been studied very extensively during the last 40 years on various Banach spaces of analytic functions on $\mathbb D$,
see  \cite{CM} and  \cite{Sh2} for comprehensive expositions of the early developments of the 
area. The compactness of  $C_\phi$ on $H^p$ is well understood, and there are several 
equivalent characterisations in the literature. 
To exhibit a specific criterion recall that 
 Shapiro \cite{Sh1} established that $C_\phi$ is a compact operator  
$H^p \to H^p$  if and only if  
\begin{equation}\label{Sh}
\lim_{\vert w \vert\to 1} \frac{N(\phi,w)}{\log(1/\vert w\vert)} = 0.
\end{equation}
Above $N(\phi,w)$ is the Nevanlinna counting function of $\phi$ defined by 
$N(\phi,w) = \sum_{z \in \phi^{-1}(w)} \log(1/\vert z\vert)$  for 
$w \in \phi(\mathbb D)$ (counting multiplicities). 
Finer gradations of compactness  were obtained e.g. 
by Luecking and Zhu \cite{LZ},  who characterised the 
membership of $C_\phi$ in the Schatten $p$-classes on $H^2$.
Moreover,  the approximation numbers of $C_\phi$ on $H^2$ were estimated in 
\cite{LLQR}, \cite{LQR1} and \cite{LQR2}, 
as well as on $H^p$ in \cite{LQR3}.

The purpose of this paper is to demonstrate that  composition operators on $H^p$ 
only allow a small variety of qualitative non-compact  behaviour compared 
to that of arbitrary bounded operators on $H^p$. Let  $E, F$ and $X$ be Banach spaces.
It will be convenient to say that the bounded linear operator 
$U: E \to F$  \textit{fixes a copy of} $X$ in $E$ if there is an infinite-dimensional subspace $M \subset E$,  $M$ linearly isomorphic to 
$X$,  for which  $U_{|M}$ is bounded below on $M$, that is, 
there is $c > 0$ so that 
$\Vert Ux\Vert \ge c \cdot \Vert x \Vert$ for all $x \in M$.
We use the standard notation $M  \approx X$ for linearly isomorphic
spaces $M$ and $X$, and refer to \cite{AK}, \cite{LT} and \cite{Wo} 
for general background related to the theory of  Banach spaces.

The trichotomy contained in Theorem \ref{Main0} below is the main result of this paper. 
Let $E_\phi = \{ e^{i\theta} : |\phi(e^{i\theta})| = 1\}$ 
be the boundary contact set of  the analytic map $\phi: \mathbb D \to \mathbb D$.
Here, and in the sequel, we use $\phi(e^{i\theta})$ to denote 
 the a.e. radial limit function of $\phi$ on $\mathbb T$. 
It is part of the trichotomy that \eqref{Sh} together with the simple condition 
\begin{equation}\label{CM}
m(E_\phi) = 0
\end{equation}
completely determine the composition operators which fix copies of the subspace $\ell^p$ or 
$\ell^2$ 
in  $H^p$.  Recall that the known compactness  results for $C_\phi$ on $H^2$
yield that  \eqref{Sh} implies \eqref{CM}, but 
the class of symbols $\phi$ satisfying \eqref{CM} is much larger 
than that of \eqref{Sh}, see e.g. \cite[Chap. 10]{Sh2}.

In the statement  below we exclude the Hilbert space $H^2$, 
where the situation is known and much simpler, 
since part (ii)  does not occur for $p = 2$ 
(cf. the discussion following Theorem \ref{Main1}).
We use  $\mathcal K(E)$ to denote
the class of compact operators $E \to E$ for any Banach space $E$, and
take into account the known characterisation of 
the composition operators $C_\phi \in \mathcal K(H^p)$.

\begin{theorem}\label{Main0}
Let $1\leq  p < \infty$,  $p \neq 2$, and $\phi$ be any analytic self-map of $\UnitDisk$. Then there are three mutually exclusive alternatives:
\begin{itemize}
\item[(i)]  $C_\phi$ is compact on $H^p$,
\item[(ii)]  $C_\phi$ fixes  a copy of $\ell^p$ in $H^p$, but 
does not fix any copies of $\ell^2$ in $H^p$,
\item[(iii)]  $C_\phi$ fixes a copy of $\ell^2$ (as well as of $\ell^p$) in $H^p$. In this case, if 
$1<  p < \infty$ and $p \neq 2$, then 
$C_\phi$ also fixes a copy of $L^p(0,1)$ in $H^p$.
\end{itemize}
Furthermore, regarding the above alternatives
\begin{itemize}
\item[(i)]  takes place if and only if  Shapiro's condition \eqref{Sh} holds,
\item[(ii)]  takes place if and only if  \eqref{Sh} fails to hold but $m(E_\phi) = 0$,
\item[(iii)]  takes place if and only if $m(E_\phi) > 0$.
\end{itemize}
In particular, $C_\phi \in \mathcal K(H^p)$ if and only if $C_\phi$ does not fix any copies of 
$\ell^p$ in $H^p$.
\end{theorem}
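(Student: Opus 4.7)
The strategy is to decompose the trichotomy using the implication $\eqref{Sh} \Rightarrow \eqref{CM}$ from standard boundary theory, so that the three alternatives form an exhaustive partition determined by the pair of conditions \eqref{Sh} and \eqref{CM}. The equivalence (i) $\Leftrightarrow$ \eqref{Sh} is Shapiro's theorem \cite{Sh1}, and since fixing a copy of $\ell^p$ or $\ell^2$ automatically precludes compactness, mutual exclusivity is immediate. The real content reduces to three separate implications: (a) if $m(E_\phi)>0$ then $C_\phi$ fixes a copy of $\ell^2$ in $H^p$, and also of $L^p(0,1)$ when $p>1$; (b) if \eqref{Sh} fails then $C_\phi$ fixes a copy of $\ell^p$; and (c) if $m(E_\phi)=0$ then $C_\phi$ fixes no copy of $\ell^2$. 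The final ``in particular'' statement is then a cosmetic consequence, since (ii) and (iii) both include the clause that $C_\phi$ fixes $\ell^p$.

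For (a), on $E_\phi$ the radial boundary values of $\phi$ have modulus one, so writing $\tau = \phi|_{E_\phi}\colon E_\phi \to \mathbb{T}$ one obtains the trivial pointwise lower bound
\[
\|C_\phi f\|_{H^p}^p \;\ge\; \int_{E_\phi} |f(\tau(e^{i\theta}))|^p\, dm(e^{i\theta})
\]
for every $f \in H^p$. The pushforward $\mu = \tau_{\#}(m|_{E_\phi})$ is a nontrivial positive Borel measure on $\mathbb{T}$, and transferring a suitable basic sequence of $L^p(\mathbb{T},\mu)$ — a normalized sequence of disjointly supported atoms for the $\ell^p$-part, and a Rademacher-type system for the $\ell^2$- and $L^p(0,1)$-parts — into $H^p$ by a Riesz-projection or analytic-lifting argument produces a subspace isomorphic to $L^p(0,1)$ on which the displayed inequality keeps $C_\phi$ bounded below. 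Since $L^p(0,1)$ contains $\ell^2$ isomorphically for $1<p<\infty$, both assertions in (a) follow.

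For (b), the failure of \eqref{Sh} supplies a sequence $w_n \in \mathbb{D}$ with $|w_n|\to 1$ and $N(\phi,w_n)/\log(1/|w_n|)\ge c>0$. Normalized reproducing-kernel-type test functions centered at the $w_n$, or their natural $H^p$-analogues (difference quotients/peaking functions of Möbius type), have essentially disjoint ``masses'' near $\partial\mathbb{D}$; after a diagonal extraction they form a block basic sequence equivalent to the $\ell^p$ unit vector basis. The counting-function bound, combined with the change-of-variables formula
\[
\|C_\phi f\|_{H^p}^p \;\asymp\; \int_{\mathbb{D}} |f'(w)|^p N(\phi,w)\, dA(w) + |f(\phi(0))|^p,
\]
gives $\|C_\phi g_n\|_{H^p}\ge c'>0$ and an analogous lower estimate on their closed linear span.

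The principal obstacle is part (c). One must show that the condition $|\phi(e^{i\theta})|<1$ a.e.\ on $\mathbb{T}$ renders $C_\phi$ ``strictly $\ell^2$-singular'' on $H^p$, in the sense that every normalized $\ell^2$-basic sequence $(f_n) \subset H^p$ has a subsequence with $\|C_\phi f_{n_k}\|_{H^p}\to 0$. The plan is to combine a Littlewood--Paley square-function description of $H^p$ with a partition of $\mathbb{T}$ into sets $A_j = \{e^{i\theta} : |\phi(e^{i\theta})|\le 1-2^{-j}\}$ whose union exhausts $\mathbb{T}$ modulo a null set; on each $A_j$ the boundary integral defining $C_\phi f$ only samples $f$ on a compact subset of $\mathbb{D}$, so the restriction $f \mapsto (C_\phi f)|_{A_j}$ factors through a compact evaluation operator. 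A diagonal/weak-compactness extraction over $j$, coupled with the $\ell^2$-independence forced by passing to a basic subsequence, should then produce the required norm-null subsequence. This is the step that genuinely separates $H^p$ from $H^2$, since the $\ell^2$- and $\ell^p$-regimes in $H^2$ collapse to the same alternative.
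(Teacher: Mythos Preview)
Your overall architecture---partitioning by the pair \eqref{Sh}/\eqref{CM} and reducing to the three implications (a), (b), (c)---matches the paper's, and your sketch of (b) is essentially the paper's argument (normalized kernel functions $g_a$ plus a gliding-hump extraction over the level sets of $|\phi|$). One caveat: the displayed change-of-variables formula with $|f'|^p N(\phi,w)$ is only an identity for $p=2$; the paper avoids it entirely and works directly with the test-function criterion \eqref{Sh1} and boundary integrals.

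The real gaps are in (a) and (c). In (a), the phrase ``Riesz-projection or analytic-lifting argument'' hides the whole difficulty. If $g$ is a Rademacher (or any $L^p(F,m)$) function and $f = P_+ g \in H^p$, then $f$ has completely different boundary values from $g$, so the pointwise lower bound $\|C_\phi f\|_p^p \ge \delta\int_F |f|^p\,dm$ tells you nothing about $\|g\|_{L^p(F)}$; the Riesz projection destroys the support structure you need. (For $p=1$ the projection is not even bounded, so the $\ell^2$ claim would fail outright.) The paper sidesteps this in two different ways. For the $\ell^2$-copy it uses \emph{lacunary monomials} $(z^{n_k})$, which are already analytic and span $\ell^2$ in $H^p$ by Paley's theorem; the lower bound $\|\sum c_k \phi^{n_k}\|_p \gtrsim \|c\|_{\ell^2}$ is obtained first in $H^2$ by an inner-product argument, then pushed to $p<2$ via the $H^1$--$\BMOA$ duality (Proposition~\ref{s2}). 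For the $L^p$-copy the paper does use your pushforward measure, but only to show $\widetilde{C_\phi}$ fixes $L^p$ in the \emph{harmonic} space $h^p$; the passage to $H^p$ then rests on the nontrivial additivity of the ideal $\mathcal S_{L^p}(L^p)$ due to Dosev--Johnson--Schechtman. No elementary lifting is known here.

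In (c), your target statement---that every $\ell^2$-basic sequence has a subsequence with $\|C_\phi f_{n_k}\|_p \to 0$---does not follow from the partition $A_j = \{|\phi|\le 1-2^{-j}\}$ and compactness on each piece: the mass of $C_\phi f_n$ on $\mathbb T\setminus A_j$ need not be small uniformly in $n$, since $\|f_n\|_p = 1$ and small Lebesgue measure does not force small $L^p$-norm. The paper's mechanism is different and is exactly where $p\neq 2$ enters: assuming $C_\phi$ is bounded below on an $\ell^2$-copy, the \emph{same} gliding-hump extraction as in (b), run over the sets $E_k=\{|\phi|\ge 1-1/k\}$, forces a subsequence of $(C_\phi f_{n_j})$ to be equivalent to the unit vector basis of $\ell^p$. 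That contradicts the assumed $\ell^2$-equivalence of the image. Your outline is missing this $\ell^p$-versus-$\ell^2$ collision, which is the actual engine of the proof.
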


Theorem \ref{Main0} is obtained by combining Theorems \ref{Main1}, \ref{Main2} and \ref{Main3} 
stated below, which also contain more precise information. 
For this purpose we first recall  some 
standard linear classes that classify the behaviour of non-compact  
operators.  Let  $E$, $F$ and $X$ be Banach spaces, and $\mathcal L(E,F)$
be the space of bounded linear operators from $E$ to $F$.
The operator  $U \in \mathcal L(E,F)$ is called \textit{$X$-singular} if $U$ does not fix 
any  copies of $X$ in $E$. We denote 
\begin{align*}
\mathcal S_X(E,F) =  \{U \in \mathcal L(E,F): U\ \textrm{is}\ X\textrm{-singular}\}, 
\end{align*}
and put $\mathcal S_p(E,F) = \mathcal S_{\ell^p}(E,F)$ to simplify our notation in the case of
 $X = \ell^p$.  Recall further that
$U \in \mathcal L(E,F)$ is   \textit{strictly singular}, 
denoted by $U \in \mathcal S(E,F)$, if $U$ is not bounded below on 
any infinite-dimensional linear subspaces  $M \subset E$. 
It is clear that $\mathcal K(E,F) \subset \mathcal S(E,F) \subset \mathcal S_p(E,F)$
for any Banach spaces $E$ and $F$, and it is known that the 
classes   $\mathcal S(E,F)$ and   $\mathcal S_p(E,F)$ 
define norm-closed operator ideals in the sense of Pietsch 
\cite{P} for any $1 \le p \le \infty$
(cf. \cite[p. 289]{W} for the case of $\mathcal S_p$). 

Part of Theorem \ref{Main0} is contained in the following dichotomy, 
which we also relate to the known characterisation of the 
compact composition operators on $H^p$.

\begin{theorem}\label{Main1}
Let $1\le  p < \infty$ and let $\phi:  \UnitDisk \to \UnitDisk$
be any analytic map. 
Then either $C_\phi \in \mathcal K(H^p)$, or else 
$C_\phi \notin \mathcal S_p(H^p)$. Equivalently,  $C_\phi$ fixes a copy of $\ell^p$
in $H^p$ if and only if \eqref{Sh} does not hold.
\end{theorem}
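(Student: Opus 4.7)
The plan is to reduce Theorem \ref{Main1} to proving the following substantive implication: if $C_\phi \notin \mathcal K(H^p)$, then $C_\phi$ fixes a copy of $\ell^p$ in $H^p$. The reverse inclusion $\mathcal K(H^p) \subset \mathcal S_p(H^p)$ is immediate, since a compact operator cannot be bounded below on any infinite-dimensional closed subspace; this, together with Shapiro's criterion \eqref{Sh}, yields the equivalent formulation stated at the end of the theorem.

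For the nontrivial direction, I would use the standard Carleson-measure reformulation of \eqref{Sh} (cf.\ \cite{CM,Sh2}): if $C_\phi$ is non-compact, then the pullback measure $\mu_\phi = m \circ \phi^{-1}$ on $\overline{\mathbb D}$ is not a vanishing Carleson measure. Hence there exist $\delta > 0$, $h_n \downarrow 0$, and $\xi_n \in \mathbb T$ such that the standard Carleson windows $S_n = S(\xi_n,h_n)$ satisfy $\mu_\phi(S_n) \ge \delta h_n$. After passing to a subsequence one may arrange the base arcs of the $S_n$ to be pairwise disjoint and $h_{n+1} \le h_n/2$. Introduce the normalized reproducing-kernel-type test functions
\[
f_n(z) = \left(\frac{1-|a_n|^2}{(1-\bar a_n z)^2}\right)^{1/p},\qquad a_n=(1-h_n)\xi_n,
\]
with the $1/p$ power taken via the principal branch, which is analytic on $\mathbb D$. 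Then $|f_n(e^{i\theta})|^p$ is the Poisson kernel at $a_n$, so $\|f_n\|_{H^p} = 1$; a standard estimate gives the pointwise lower bound $|f_n(z)|^p \gtrsim 1/h_n$ on $S_n$, and therefore
\[
\|C_\phi f_n\|_{H^p}^p = \int_{\overline{\mathbb D}} |f_n|^p\,d\mu_\phi \;\ge\; \int_{S_n} |f_n|^p\,d\mu_\phi \;\gtrsim\; h_n^{-1}\mu_\phi(S_n) \;\ge\; \delta.
\]

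To extract a copy of $\ell^p$, the plan is to show that after a further subsequence both $(f_n)$ and $(C_\phi f_n)$ are $C$-equivalent to the unit vector basis of $\ell^p$ in $L^p(\mathbb T)$ with matched constants, so that $C_\phi$ is bounded below on $\overline{\mathrm{span}}\{f_n\}\approx \ell^p$ inside $H^p$. The boundary densities $|f_n|^p = P_{a_n}$ concentrate in an arc of length $\asymp h_n$ around $\xi_n$ with small tails, so $(f_n)$ is approximately disjointly supported in $L^p(\mathbb T)$, and a classical Bessaga--Pe\l czy\'nski/gliding-hump perturbation yields the desired $\ell^p$-equivalence. For $(C_\phi f_n)$, one argues that $|f_n\circ\phi|^p$ is essentially supported on the measurable pullback $\phi^{-1}(S_n^*)\cap\mathbb T$ of a slight enlargement of $S_n$; since the base arcs of $S_n$ are pairwise disjoint, so are these pullbacks, and the same perturbation principle produces an $\ell^p$-equivalent subsequence with lower constant $\gtrsim \delta^{1/p}$.

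The main obstacle is the tail estimate for $(C_\phi f_n)$, i.e.\ verifying that
\[
\int_{\mathbb T\setminus \phi^{-1}(S_n^*)} |f_n\circ \phi|^p\,dm \;=\; \int_{\overline{\mathbb D}\setminus S_n^*} |f_n|^p\,d\mu_\phi \;\longrightarrow\; 0.
\]
For $(f_n)$ itself this is elementary Poisson-kernel analysis, but for $(C_\phi f_n)$ one must integrate the pointwise decay $|f_n(w)|^p \lesssim h_n/|1-\bar a_n w|^2$ against $\mu_\phi$ off the enlarged window, which requires the (non-vanishing) Carleson-measure boundedness of $\mu_\phi$ furnished by the Littlewood subordination principle. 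Once these tail bounds are secured, the matched $\ell^p$-basis argument is classical and the theorem follows; the case $p=1$ is covered since $H^1\hookrightarrow L^1(\mathbb T)$ and approximately disjoint positive sequences in $L^1$ are likewise $\ell^1$-equivalent.
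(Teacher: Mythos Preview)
Your overall plan matches the paper's: use the normalised kernel test functions $f_n=g_{a_n}$, invoke the Carleson/test-function criterion for non-compactness to obtain $\|C_\phi f_n\|_p \ge d > 0$ along a sequence $a_n\to\partial\mathbb D$, and then run a gliding-hump argument to show that suitable subsequences of $(f_n)$ and of $(C_\phi f_n)$ are each equivalent to the $\ell^p$ unit vector basis. The difference is in how the ``almost disjoint supports'' are organised, and your version has a genuine gap there.

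The problem is twofold. First, the claim that one may pass to a subsequence with pairwise disjoint base arcs is unjustified: nothing prevents all the witnessing windows from being centred at the same point $\xi_0\in\mathbb T$ (the non-vanishing Carleson condition is a $\limsup$, not a statement about many distinct centres), in which case the base arcs are nested and no subsequence is disjoint. Second, and more seriously, with a \emph{fixed} enlargement $S_n^*=S(\xi_n,Ch_n)$ the tail
$\int_{\overline{\mathbb D}\setminus S_n^*}|f_n|^p\,d\mu_\phi$
is only $O(\|\mu_\phi\|_{\mathrm{Carleson}}/C)$ by the dyadic estimate you allude to; it does not tend to $0$ as $n\to\infty$. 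A uniform-but-not-summable error is insufficient for the perturbation step: when you estimate $\bigl\|\sum_n c_n C_\phi f_n\bigr\|_p$ from below and split off the diagonal contribution on the disjoint sets, the remainder is controlled only by $\sum_n|c_n|$ rather than by $\|c\|_{\ell^p}$, and the lower bound is lost for $p>1$.

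The paper sidesteps both issues with a different decomposition. After a rotation it arranges $a_n\to 1$ and works with the nested target-side sets $E_\varepsilon=\{\xi\in\mathbb T:|\phi(\xi)-1|<\varepsilon\}$, whose annuli $E_{\varepsilon_n}\setminus E_{\varepsilon_{n+1}}$ are disjoint automatically. The two limits
\[
\int_{\mathbb T\setminus E_\varepsilon}|C_\phi g_{a_n}|^p\,dm\xrightarrow[n\to\infty]{}0\quad(\varepsilon\ \text{fixed}),\qquad
\int_{E_\varepsilon}|C_\phi g_{a_n}|^p\,dm\xrightarrow[\varepsilon\to 0]{}0\quad(n\ \text{fixed})
\]
follow from an elementary pointwise bound (no Carleson-measure machinery needed), and they permit a \emph{recursive} choice of $\varepsilon_n\downarrow 0$ and of the subsequence so that all errors are of size $4^{-n}$, hence summable. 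Your sketch becomes correct once you replace the shrinking-disjoint-windows picture by this nested filtration; the ``main obstacle'' you flag then disappears.
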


The above theorem holds for $p = 2$ 
because of the general fact due to Calkin that 
$\mathcal K(H^2) = \mathcal S(H^2) = \mathcal S_2(H^2)$
for the Hilbert space $H^2$, see e.g. \cite[5.1-5.2]{P}.
For $1 < p < \infty$ and $p \neq 2$ one has that
\begin{equation}\label{Weis}
\mathcal S(H^p) = \mathcal S_2(H^p) \cap \mathcal S_p(H^p).
\end{equation}
This follows from the 
characterisation of $\mathcal S(L^p)$ by Weis  \cite{W}  combined 
with the well-known fact   that  $H^p \approx L^p \equiv L^p(0,1)$, 
see e.g. \cite[2.c.17]{LT2}.
By contrast, for $p \neq 2$ all the inclusions 
\begin{equation}\label{strict}
\mathcal K(H^p) \varsubsetneq \mathcal S(H^p), \quad
\mathcal S(H^p) \varsubsetneq \mathcal S_2(H^p), \quad
\mathcal S(H^p) \varsubsetneq \mathcal S_p(H^p)
\end{equation}
 are strict. This is easily deduced from the facts that 
$H^p \approx L^p$ contains complemented subspaces isomorphic to  
$\ell^p$ and $\ell^2$, whereas any $U \in \mathcal L(\ell^p,\ell^q)$
is strictly singular for $p \neq q$, see e.g. \cite[2.c.3]{LT}. 
Thus  Theorem \ref{Main1} states  that for $p \neq 2$  the compactness 
of composition operators 
$C_\phi \in \mathcal L(H^p)$ is a fairly rigid property as compared to \eqref{Weis}
and \eqref{strict} for arbitrary operators.  
It is also convenient to rephrase this as follows:

\begin{corollary}\label{main}
For $1 \le p < \infty$ the following conditions are equivalent for 
any analytic map  $\phi:  \UnitDisk \to \UnitDisk$:
\begin{itemize} 
\item[(i)] $\phi$ satisfies \eqref{Sh}, 
\item[(ii)] $C_\phi \in \mathcal K(H^p)$,
\item[(iii)]  $C_\phi \in \mathcal S(H^p)$,
\item[(iv)]  $C_\phi \in \mathcal S_p(H^p)$.
\end{itemize}
\end{corollary}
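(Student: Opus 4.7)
The plan is to recognise that Corollary \ref{main} is essentially a repackaging of Theorem \ref{Main1} together with Shapiro's classical compactness criterion and the standard containment of operator ideals. Thus I would prove the chain of implications
\[
\text{(i)} \iff \text{(ii)} \implies \text{(iii)} \implies \text{(iv)} \implies \text{(ii)}.
\]

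The equivalence (i) $\iff$ (ii) is Shapiro's theorem, already recorded in \eqref{Sh} at the beginning of the introduction, so there is nothing to do. The implications (ii) $\implies$ (iii) $\implies$ (iv) are immediate from the general inclusions
\[
\mathcal K(H^p) \subset \mathcal S(H^p) \subset \mathcal S_p(H^p),
\]
noted in the paragraph defining the $\ell^p$-singular operators (every compact operator is strictly singular, and every strictly singular operator is $X$-singular for any infinite-dimensional $X$, since by definition strict singularity forbids being bounded below on \emph{any} infinite-dimensional subspace).

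The only non-trivial step is (iv) $\implies$ (ii), and this is exactly the content of Theorem \ref{Main1}: if $C_\phi \notin \mathcal K(H^p)$, then $C_\phi$ fixes a copy of $\ell^p$ in $H^p$, i.e.\ $C_\phi \notin \mathcal S_p(H^p)$. Contrapositively, $C_\phi \in \mathcal S_p(H^p)$ forces $C_\phi \in \mathcal K(H^p)$. This closes the loop.

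Hence the corollary is entirely a formal consequence of material already in hand, and the real work (establishing (iv) $\implies$ (ii) for non-compact symbols) is deferred to the proof of Theorem \ref{Main1} itself. In particular there is no genuine obstacle at the level of the corollary; the last sentence of the proof can simply invoke Theorem \ref{Main1} together with Shapiro's criterion and the trivial ideal inclusions.
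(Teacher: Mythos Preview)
Your proposal is correct and matches the paper's treatment: the paper presents Corollary \ref{main} as a direct rephrasing of Theorem \ref{Main1} (``It is also convenient to rephrase this as follows'') without giving a separate proof, relying on exactly the ingredients you list --- Shapiro's criterion for (i)$\iff$(ii), the trivial inclusions $\mathcal K \subset \mathcal S \subset \mathcal S_p$, and Theorem \ref{Main1} for (iv)$\Rightarrow$(ii).
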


The first result (excluding the case $H^2$) in the direction of Theorem \ref{Main1}
and Corollary \ref{main}
 is due to Sarason \cite{S1},  who showed that  $C_\phi$ 
is weakly compact $H^1 \to H^1$ if and only if it is compact. 
Jarchow \cite[p. 95]{J} pointed out that as a consequence 
$C_\phi \in \mathcal K(H^1)$ if and only if 
$C_\phi$ is weakly conditionally compact on $H^1$, that is, 
$C_\phi \in \mathcal S_1(H^1)$ in view of Rosenthal's $\ell^1$-theorem, see e.g. \cite[2.e.5]{LT}.
Hence the case $p = 1$ in Theorem \ref{Main1} and Corollary \ref{main} was known 
earlier.  We refer to Subsection \ref{rigidity}
for a list of further references to analogous rigidity results for composition operators on several 
(classical) Banach spaces $E$ of analytic functions on the unit disk $\UnitDisk$.

The lattice structure of the operator norm-closed ideals of 
$\mathcal L(H^p) \approx \mathcal L(L^p)$ 
containing the compact operators is quite complicated for 
$1 < p < \infty$ and $p \neq 2$, see e.g. \cite[5.3.9]{P} and \cite{Sch}. For instance,
$\mathcal S_p(H^p)$ and $\mathcal S_2(H^p)$ are mutually incomparable classes, since
 $H^p \approx L^p$ contains complemented copies of  $\ell^2$ and $\ell^p$.
However, note that Corollary \ref{main} implies that  if 
$C_\phi \in \mathcal L( H^p)$ fixes a copy of 
$\ell^2$ in $H^p$,  then  $C_\phi$ must also fix a copy of $\ell^p$ in $H^p$. 
These facts raise the problem whether it is possible to explicitly 
determine the $\ell^2$-singular 
composition operators on $H^p$. 
In turns out in Theorem \ref{Main2} below that condition \eqref{CM}  
characterises this class, thus providing a finer 
classification of the non-compact $C_\phi \in \mathcal L(H^p)$
for $1 \le p < \infty$ and $p \neq 2$. 
We stress that Theorem \ref{Main2} (as well as 
the subsequent Theorem \ref{Main3})  does \textit{not} hold for $H^2$. 

\begin{theorem}\label{Main2}
Let $1 \le p < \infty$, $p \neq 2$,  and $\phi:  \UnitDisk \to \UnitDisk$
be an analytic map. Then  $C_\phi$ fixes a copy of $\ell^2$
in $H^p$ if and only if $m(E_\phi) > 0$. Equivalently,
$C_\phi \in \mathcal S_2(H^p)$ if and only if  \eqref{CM} holds.
\end{theorem}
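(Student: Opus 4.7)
Both directions boil down to a change of variables using the pushforward measure $\nu := (\phi|_{E_\phi})_*(m|_{E_\phi})$ on $\mathbb{T}$, which has total mass $\nu(\mathbb{T}) = m(E_\phi)$.

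For the direction ``$m(E_\phi)>0\Rightarrow C_\phi$ fixes $\ell^2$'', my plan is to exhibit an explicit $\ell^2$-copy spanned by lacunary monomials. Fix a Hadamard lacunary sequence $\{n_k\}$, say $n_{k+1}\ge 3 n_k$; by Paley's theorem, $M := \overline{\mathrm{span}}\{z^{n_k}\}$ is isomorphic to $\ell^2$ in $H^p$ with $\|\sum_k a_k z^{n_k}\|_{H^p}\asymp(\sum_k|a_k|^2)^{1/2}$. For $f=\sum_k a_k z^{n_k}\in M$, restricting the integral defining $\|C_\phi f\|_{H^p}^p$ to $E_\phi$ and pushing forward via $\phi|_{E_\phi}\colon E_\phi\to\mathbb{T}$ yields
\[
\|C_\phi f\|_{H^p}^p \ge \int_{E_\phi}\Bigl|\sum_k a_k\phi^{n_k}\Bigr|^p\,dm = \int_\mathbb{T}\Bigl|\sum_k a_k z^{n_k}\Bigr|^p\,d\nu(z).
\]
A L\"owner-type estimate gives $\nu\le P_{\phi(0)}\cdot m$, so $\nu\ll m$ and its density $h=d\nu/dm$ satisfies $h\ge\varepsilon>0$ on some measurable $B\subset\mathbb{T}$ with $m(B)>0$. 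Zygmund's classical inequality for lacunary trigonometric series restricted to a set of positive measure then yields
\[
\int_B\Bigl|\sum_k a_k z^{n_k}\Bigr|^p\,dm \gtrsim_{B,p,q}\Bigl(\sum_k|a_k|^2\Bigr)^{p/2},
\]
so $\|C_\phi f\|_{H^p}\gtrsim\|f\|_{H^p}$ on $M$, and $C_\phi$ fixes the $\ell^2$-copy $M$.

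For the converse ``$m(E_\phi)=0\Rightarrow C_\phi\in\mathcal{S}_2(H^p)$'', I would argue by contradiction. Suppose $C_\phi$ is bounded below on some $M\approx\ell^2$ with normalized basis $\{f_n\}$, so that $\{f_n\}$ is weakly null in $H^p$ yet $\|C_\phi f_n\|_p\ge c>0$. Since $|\phi|<1$ a.e., continuity of point evaluation at each $w\in\mathbb{D}$ on $H^p$ forces $C_\phi f_n\to 0$ pointwise a.e.\ on $\mathbb{T}$. For each $r<1$, the truncation $T_r f(\xi):=\mathbf{1}_{\{|\phi(\xi)|\le r\}}\,f(\phi(\xi))$ is compact as an operator $H^p\to L^p(\mathbb{T})$ (by Montel's theorem on $\{|z|\le r\}$), so $\|T_r f_n\|_p\to 0$ for each fixed $r$. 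As $m(\{|\phi|>r\})\to 0$ when $r\to 1$, Vitali's convergence theorem would then deliver $\|C_\phi f_n\|_p\to 0$, contradicting $c>0$---provided $\{|C_\phi f_n|^p\}$ is uniformly integrable in $L^1(\mathbb{T})$.

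This uniform integrability is the main obstacle. To handle it I would exploit the $\ell^2$-structure of $\{f_n\}$: after passing to a subsequence, the Kadec--Pelczynski dichotomy in $L^p\cong H^p$ (for $p\neq 2$) forces $\{|f_n|^p\}$ to be uniformly integrable on $\mathbb{T}$. The remaining, genuinely subtle task is to transfer this uniform integrability from $\{f_n\}$ on $\mathbb{T}$ to the compositions $\{f_n\circ\phi\}$; one natural tool is the subharmonic estimate $|f_n\circ\phi|^p\le P_{\phi(\cdot)}\bigl[|f_n|^p\bigr]$ combined with boundedness of Poisson integrals, but the cases $1\le p<2$ and $p>2$ may call for separate arguments.
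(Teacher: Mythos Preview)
Your forward direction ($m(E_\phi)>0$) is correct and takes a genuinely different route from the paper. The paper first obtains an $H^2$ lower bound $\|\sum_k c_k\phi^{n_k}\|_2 \gtrsim \|c\|_{\ell^2}$ by an elementary inner-product argument (Lemma~\ref{le:phipowers}), and then upgrades to $H^p$ via a case split: H\"older for $p\ge 2$, and Paley's theorem in $\BMOA$ together with Fefferman's $H^1$--$\BMOA$ duality for $1\le p<2$. Your use of Zygmund's inequality for lacunary series on sets of positive measure handles all $p$ at once, which is arguably cleaner; the paper's route, on the other hand, stays within softer tools and feeds directly into the later $\BMOA$ result (Proposition~\ref{BMOA}).

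Your converse ($m(E_\phi)=0$) has a real gap, and comparison with the paper reveals that you are aiming for more than is needed. You try to prove $\|C_\phi f_n\|_p\to 0$, which forces the uniform-integrability transfer you rightly flag as delicate (and your Kadec--Pe\l czy\'nski invocation is only straightforward for $p>2$). The paper never attempts this. Instead, from exactly the two limits you already isolate --- namely $\int_{\{|\phi|\ge 1-1/k\}}|C_\phi f_n|^p\,dm\to 0$ as $k\to\infty$ for each fixed $n$, and $\int_{\{|\phi|<1-1/k\}}|C_\phi f_n|^p\,dm\to 0$ as $n\to\infty$ for each fixed $k$ --- the same gliding-hump construction as in the proof of Theorem~\ref{Main1} extracts a subsequence $(C_\phi f_{n_j})$ that is a small perturbation of a disjointly supported sequence in $L^p(\mathbb{T})$, hence equivalent to the unit vector basis of $\ell^p$. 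But the contradiction hypothesis says the same sequence is equivalent to the $\ell^2$-basis, which is impossible for $p\neq 2$. No uniform integrability, and no Poisson-kernel transfer, is required.
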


Cima and Matheson \cite{CiM1} have shown that  (\ref{CM})
characterises the completely continuous  composition operators  
$C_\phi \in \mathcal L( H^1)$. 
As a significant strengthening of Theorem \ref{Main2} we are further able to
show that for $p > 1$ (and $p \neq 2$) 
condition (\ref{CM}) actually  describes the operators $C_\phi$ which belong to the class 
$\mathcal  S_{L^{p}}(H^p)$. Here $\mathcal S_{L^{p}}(H^p)$ 
is the maximal non-trivial ideal of $\mathcal L(H^p)$, see \cite[p. 103]{DJS}.
To state the relevant result let $h^p$ be the harmonic Hardy space consisting 
of the harmonic functions 
$f:  \UnitDisk \to \mathbb C$ normed by \eqref{Hnorm}.

\begin{theorem}\label{Main3}
Let $1 < p < \infty$, $p \neq 2$,  and  $\phi:  \UnitDisk \to \UnitDisk$
be an analytic map. Then  the following conditions are equivalent:
\begin{itemize}
\item[(i)] $\phi$ satisfies $m(E_\phi) = 0$, 
\item[(ii)] $C_\phi \in \mathcal  S_{L^{p}}(H^p)$, that is, $C_\phi$ does not fix any copies 
of $L^p$ in $H^p$,
\item[(iii)] $C_\phi \in \mathcal  S_{L^{p}}(h^p)$, 
\item[(iv)] $C_\phi \in \mathcal S_2(H^p)$.
\end{itemize}
\end{theorem}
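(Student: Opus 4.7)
The equivalence (i)~$\Leftrightarrow$~(iv) is immediate from Theorem~\ref{Main2}. Since the Rademacher sequence in $L^p$ ($1<p<\infty$) spans a complemented copy of $\ell^2$, every $L^p$-copy fixed by $C_\phi$ contains a fixed $\ell^2$-copy, giving (iv)~$\Rightarrow$~(ii). Also (iii)~$\Rightarrow$~(ii) is trivial because $H^p \hookrightarrow h^p$ isometrically and $C_\phi$ acts identically on common subspaces.

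For (i)~$\Rightarrow$~(iii), I would use that $h^p$ decomposes, via the Riesz projection (bounded on $L^p$ for $1<p<\infty$), as $H^p$ plus the anti-analytic Hardy subspace; $C_\phi$ acts on this decomposition as an essentially direct-sum operator built from $C_\phi|_{H^p}$ and its complex conjugate, modulo a rank-one correction stemming from the constant term. Since $\mathcal S_2$ is a closed operator ideal (so closed under finite direct sums and finite-rank perturbations) and $C_\phi|_{H^p} \in \mathcal S_2(H^p)$ by Theorem~\ref{Main2}, we conclude $C_\phi \in \mathcal S_2(h^p) \subset \mathcal S_{L^p}(h^p)$, as desired.

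The crux of the theorem is the converse: if $m(E_\phi) > 0$, then $C_\phi$ fixes a copy of $L^p$ in $H^p$ (which is (ii)~$\Rightarrow$~(i) contrapositively). My plan is to construct an isomorphic embedding $J: L^p(I) \to H^p$, for a suitable compact arc $I \subset \mathbb T$, whose image $C_\phi$ maps with uniform lower bound. By Lusin's theorem extract a compact $K \subset E_\phi$ of positive measure on which $\psi := \phi|_K$ is continuous; refine $K$ via a measurable decomposition so that the push-forward $\psi_*(m|_K)$ has an absolutely continuous part bounded below by $c\cdot m$ on some compact arc $I \subset \mathbb T$. Then define $J$ by composing the Poisson extension with the Riesz projection, adjusted so that the boundary values of $Jg$ approximate $g$ on $I$ in the $L^p$-sense. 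Using the identification $(C_\phi Jg)^*|_K = (Jg)^* \circ \psi$ almost everywhere (valid since $\phi$ has non-tangential limits equal to $\psi$ at a.e.\ point of $E_\phi$),
\[
\|C_\phi Jg\|_{H^p}^p \geq \int_K |(Jg)^* \circ \psi|^p\,dm \geq c\int_I |(Jg)^*|^p\,dm \gtrsim \|g\|_p^p,
\]
which is the desired lower bound.

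The main obstacle is this measure-theoretic and function-theoretic construction: for a general analytic $\phi$, the push-forward $\psi_*(m|_K)$ may be entirely singular, and extracting an absolutely continuous portion with density bounded below requires a delicate reduction, possibly invoking a Luzin-type absolute continuity condition for the boundary maps of analytic self-maps of $\mathbb D$. A further subtlety is that the boundary restriction $H^p \to L^p(I)$ is not surjective for $p<\infty$, so the embedding $J$ must be carefully engineered to keep $(Jg)^*$ comparable to $g$ in $L^p(I)$.
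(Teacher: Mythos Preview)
Your framework for the easy implications is correct, and your route (i)~$\Rightarrow$~(iii) via the ideal property of $\mathcal S_2$ is a legitimate shortcut. But both of the obstacles you flag for the hard direction are misdiagnosed, and the paper handles that direction quite differently.

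Your ``main obstacle'' is illusory. After reducing to $\phi(0)=0$ (compose with the M\"obius automorphism $\sigma_{\phi(0)}$), the push-forward $\nu(A)=m(\phi^{-1}(A))$ on $\mathbb T$ is \emph{automatically} absolutely continuous: with $u_A=P[\chi_A]$ the harmonic extension,
\[
\nu(A)\le\int_{\mathbb T}u_A\circ\phi\,dm=u_A(\phi(0))=u_A(0)=m(A).
\]
Since $\nu(\mathbb T)=m(E_\phi)>0$, the density $d\nu/dm$ is bounded below by some $\delta>0$ on a Borel set $F$ of positive measure, and then $\widetilde{C_\phi}$ is bounded below on the natural subspace $L^p(F,m)\subset h^p=L^p(\mathbb T,m)$. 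No Lusin refinement or worry about singularity is needed; this is the paper's proof of $\neg$(i)~$\Rightarrow$~$\neg$(iii).

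What you call a ``further subtlety'' --- building an embedding $J\colon L^p(I)\to H^p$ with $(Jg)^*|_I$ comparable to $g$ --- is the genuine difficulty in your direct attack on $\neg$(i)~$\Rightarrow$~$\neg$(ii), and the paper does not attempt it. Instead the paper closes the cycle through (ii)~$\Rightarrow$~(iii): writing $\widetilde{C_\phi}$ as a sum of two operators each factoring through $C_\phi|_{H^p}\in\mathcal S_{L^p}(H^p)$ (via the splitting $h^p=H^p\oplus\overline{H^p_0}$), one invokes the non-trivial theorem of Dosev--Johnson--Schechtman that $\mathcal S_{L^p}(L^p)$ is closed under addition. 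So the paper's route is (i)~$\Rightarrow$~(ii)~$\Rightarrow$~(iii)~$\Rightarrow$~(i), with the DJS additivity doing the transfer from $H^p$ to $h^p$. Your scheme, by contrast, would yield a DJS-free proof \emph{if} the embedding $J$ could be constructed --- but that construction is not supplied, and the Riesz projection does not preserve boundary values on $I$, so your sketch as it stands has a real gap precisely there.
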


The paper is organised as follows. 
The proof of Theorem \ref{Main1} is  given in Section \ref{Proof}.
The argument is based on explicit perturbation estimates, where the starting point is 
a known test function reformulation of the compactness criterion \eqref{Sh}.
The proofs of Theorems \ref{Main2} and \ref{Main3}  are contained in Section 
\ref{Proof2}. Although these results are connected we have stated them separately, 
since the argument for the $\ell^2$-singularity in $H^p$ 
also holds for $p = 1$. By contrast 
the proof of Theorem \ref{Main3} relies on properties of $h^p = L^p(\mathbb T, m)$ 
for $1 < p < \infty$, and it depends on the non-trivial fact  due to Dosev et al.  \cite{DJS} 
that the class $\mathcal S_{L^{p}}(L^p) \approx \mathcal  S_{L^{p}}(h^p) $ is additive.
Section \ref{Con} contains a number of further comments and open problems.
As an application of Section \ref{Proof2}  
we characterise the $\ell^2$-singular compositions 
$C_\phi \in \mathcal L(\VMOA)$.
As an additional motivation we also indicate a connection
 between a weaker version of Corollary \ref{main} and 
a general extrapolation result \cite{HST} for operators on $L^p$-spaces. 

A starting point for this paper was  a question by Jonathan Partington 
about the strict singularity of composition operators on $H^p$ for $p \neq 2$. 
We are indebted to 
Manuel Gonz\'alez, Francisco Hern\'andez and Dimitry Yakubovich for timely  questions 
towards Theorems \ref{Main2} and \ref{Main3}.

\section{Proof of Theorem \ref{Main1}}\label{Proof}

For $a \in \mathbb D$ and fixed $0 < p < \infty$ let  
\[
g_a(z) = \frac{(1 - \vert a\vert^2)^{1/p}}{(1- \overline{a}z)^{2/p}}, \quad z \in \mathbb D.
\]
Here $\Vert g_a\Vert_p = 1$, since for $p = 2$ 
the corresponding function  is the normalised reproducing kernel in $H^2$
associated to  $a \in \mathbb D$. The proof of Theorem \ref{Main1} is based 
on the following criterion: 
\textit{$C_\phi \in \mathcal K(H^p)$ if and only if}
\begin{equation}\label{Sh1}
\limsup_{\vert a \vert \to 1} \Vert C_\phi(g_a)\Vert_p = 0. 
\end{equation}
This is a restatement using the test functions $(g_a) \subset H^p$ of
a well-known characterisation of the compact operators $C_\phi \in \mathcal L(H^p)$
in terms of vanishing Carleson pull-back measures, see \cite[Thm. 3.12.(2)]{CM} 
(such a characterisation was first obtained by MacCluer \cite{MC} in the case of 
$H^p(B_N)$ for $N > 1$, where $B_N$ is the open euclidean ball in $\mathbb C^N$). 
Alternatively, \eqref{Sh1} is stated explicitly for $p = 2$ 
in e.g. \cite[5.4]{Sh1}, whereas 
the compactness of $C_\phi: H^p \to H^p$ is independent 
of $p \in (0,\infty)$ e.g. by \cite[Thm. 3.12.(2)]{CM}. 
After these preparations we proceed to the proof itself.

\begin{proof}[Proof of Theorem \ref{Main1}]
Suppose that $C_\phi \notin \mathcal K(H^p)$, where $1 \le p < \infty$. 
We will show by an explicit perturbation argument 
that  $C_\phi$ fixes a linearly isomorphic copy of $\ell^p$ in $H^p$.

Since condition \eqref{Sh1} fails there is $d > 0$ and a sequence $(a_n) \subset \mathbb D$ 
so that $\vert a_n\vert \to 1$ as $n \to \infty$ and
\begin{equation}\label{Sh2}
\Vert C_\phi(g_{a_{n}})\Vert_p \ge d > 0
\end{equation}
for all  $n \in \mathbb N$.
We may further assume  without loss of generality that 
$a_n \to 1$ as $n \to \infty$.
Namely, we may pass to a convergent subsequence 
in $\overline{\mathbb D}$ and compose $\phi$ with a 
suitable rotation of $\mathbb D$ that defines a linear isomorphism of $H^p$.  

Our starting point is the phenomenon that $(g_{a_{n}})$ admits subsequences 
which are small perturbations of a disjointly supported sequence in $L^p(\mathbb T, m)$, and 
hence  span an isomorphic copy of $\ell^p$. 
The crux of the argument is that this 
 can be achieved  simultaneously 
for further subsequences of  $(C_\phi(g_{a_{n}}))$, and the
following claim actually contains the basic step of the argument:

\begin{clm}\label{subsq} 
There is a subsequence of $(a_n)$, still denoted by $(a_n)$ for simplicity, 
for which there are constants  $c_1, c_2 > 0$ so that 
\begin{equation}\label{est}
c_1 \cdot \Vert (b_j)\Vert_{\ell^p} \le  \biggl\| \sum_{j=1}^\infty b_j C_\phi(g_{a_{j}})  \biggl\|_p 
 \le c_2 \cdot \Vert (b_j)\Vert_{\ell^p} \quad \textrm{for all}\  (b_j) \in \ell^p.
\end{equation}
\end{clm}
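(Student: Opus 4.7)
The plan is to exhibit both $(g_{a_n})$ and $(C_\phi g_{a_n})$, after passing to a common subsequence, as small $L^p(\mathbb{T})$-perturbations of disjointly supported sequences; the two-sided estimate \eqref{est} will then follow from the isometric character of disjoint sums in $L^p$ combined with the standard principle of small perturbations.

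First I would verify that both sequences tend to $0$ pointwise a.e.\ on $\mathbb{T}$. Since $a_n \to 1$, the identity $|g_{a_n}(e^{i\theta})|^p = (1-|a_n|^2)/|1-\overline{a}_n e^{i\theta}|^2$ gives $g_{a_n}(e^{i\theta}) \to 0$ whenever $e^{i\theta} \neq 1$, while $\|g_{a_n}\|_p = 1$. For the composed sequence, $(C_\phi g_{a_n})(e^{i\theta}) = g_{a_n}(\phi(e^{i\theta}))$ a.e., which tends to $0$ whenever $\phi(e^{i\theta}) \neq 1$. The hypothesis $C_\phi \notin \mathcal K(H^p)$ forces $\phi$ to be non-constant, so Privalov's uniqueness theorem yields $m(\phi^{-1}\{1\}) = 0$, and hence $C_\phi g_{a_n} \to 0$ a.e.\ on $\mathbb{T}$ as well. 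Consequently both $(g_{a_n})$ and $(C_\phi g_{a_n})$ are bounded in $L^p(\mathbb{T})$ and converge to zero in measure.

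Next I would invoke the Kadec--Pelczynski subsequence-splitting principle: any $L^p(\mathbb{T})$-bounded sequence converging to zero in measure admits, upon passing to a subsequence, pairwise disjoint measurable sets capturing its $L^p$-mass up to any prescribed tolerance. A diagonal extraction applied simultaneously to $(g_{a_n})$ and $(C_\phi g_{a_n})$ produces a single subsequence, still written $(a_k)$, pairwise disjoint sets $(E_k),(F_k) \subset \mathbb{T}$, and a tolerance sequence $(\epsilon_k) \in \ell^1$ such that
\[
\|g_{a_k} - g_{a_k}\chi_{E_k}\|_p < \epsilon_k \quad \text{and} \quad \|C_\phi g_{a_k} - C_\phi g_{a_k}\chi_{F_k}\|_p < \epsilon_k.
\]
The disjointness of $(F_k)$ yields the identity $\bigl\|\sum_k b_k C_\phi g_{a_k}\chi_{F_k}\bigr\|_p^p = \sum_k |b_k|^p\|C_\phi g_{a_k}\chi_{F_k}\|_p^p$, and since $\|C_\phi g_{a_k}\chi_{F_k}\|_p \geq d - \epsilon_k$ by \eqref{Sh2} and is uniformly bounded by $\|C_\phi\|$, this disjointly supported system is $\ell^p$-equivalent. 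The triangle inequality together with H\"older's bound $\sum_k |b_k|\epsilon_k \leq \|(b_k)\|_{\ell^p}\,\|(\epsilon_k)\|_{\ell^q}$ then lets me transfer this two-sided $\ell^p$-estimate to $(C_\phi g_{a_k})$ itself, producing \eqref{est}.

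The principal technical obstacle is the simultaneous subsequence selection: $(g_{a_n})$ concentrates its $L^p$-mass near $1 \in \mathbb{T}$, while $(C_\phi g_{a_n})$ concentrates on boundary preimages under $\phi$ of small neighbourhoods of $1$, and the two geometries are unrelated. The subsequence-splitting lemma handles each sequence individually, but a careful diagonal induction is required to accomplish both with one subsequence: alternately refine a nested family of arcs $\{|\theta|<r_k\}$ capturing the $g_{a_k}$-mass and a nested family of level sets $\{|1-\phi(e^{i\theta})|<s_k\}$ capturing the $C_\phi g_{a_k}$-mass, tuning the parameters $r_k,s_k$ and the indices so that the resulting annular differences are pairwise disjoint and the approximation errors are $\ell^1$-summable.
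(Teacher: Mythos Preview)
Your approach is correct and essentially coincides with the paper's: both build the subsequence by a gliding-hump argument on the level sets $E_\varepsilon = \{e^{i\theta} : |\phi(e^{i\theta}) - 1| < \varepsilon\}$ (your nested family $\{|1-\phi(e^{i\theta})|<s_k\}$), exploiting that $C_\phi g_{a_n}$ concentrates its $L^p$-mass on $\phi^{-1}$ of small neighbourhoods of $1$ while vanishing uniformly off these sets. The paper isolates the two concentration properties explicitly as a lemma and then runs the induction by hand, whereas you package the same mechanism as an instance of Kadec--Pe\l czy\'nski subsequence splitting; this difference is cosmetic.

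One organizational point is worth noting. The simultaneous diagonal extraction for $(g_{a_n})$ and $(C_\phi g_{a_n})$ that you flag as the ``principal technical obstacle'' is not actually needed for the claim as stated, which concerns only the sequence $(C_\phi g_{a_j})$. The paper sidesteps this issue entirely: it proves \eqref{est} for $(C_\phi g_{a_j})$ alone, and afterwards reapplies the same claim with $\phi(z) = z$ to pass to a \emph{further} subsequence along which $(g_{a_j})$ is also equivalent to the $\ell^p$-basis. Since the estimate \eqref{est} is preserved under passing to subsequences, no simultaneous selection is required. Your approach of handling both at once certainly works, but the paper's two-pass strategy is the cleaner way to organize it.
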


Assuming Claim \ref{subsq}  momentarily, the proof of Theorem \ref{Main1} is completed 
by using this claim a second time (formally in the case
 where $\phi(z) = z$ for $z \in \mathbb D$)
to extract a further subsequence of $(g_{a_{n}})$, still denoted 
by $(g_{a_{n}})$, so that 
\begin{equation}\label{est2}
d_1 \cdot \Vert (b_j)\Vert_{\ell^p} \le \biggl\| \sum_{j=1}^\infty b_j g_{a_{j}} \biggl\|_p 
 \le d_2 \cdot \Vert (b_j)\Vert_{\ell^p} \quad \textrm{for all}\  (b_j) \in \ell^p,
\end{equation}
for suitable constants $d_1, d_2 > 0$.
Then by combining \eqref{est} and \eqref{est2} we get  
\begin{align*}
\biggl\| \sum_{j=1}^\infty b_j C_\phi(g_{a_{j}}) \biggl\|_p & 
\ge c_1 \Vert (b_j) \Vert_p\\ 
& \ge c_1{d_2}^{-1} \biggl\| \sum_{j=1}^\infty b_j g_{a_{j}} \biggl\|_p,
\end{align*}
so that the restriction of $C_\phi$ defines a linear isomorphism $M \to C_\phi(M)$,
where $M = \overline{\operatorname{span}}\{g_{a_{j}}: j \in \mathbb N\} \approx \ell^p$.

Let  $A = \{\xi \in \mathbb T: \textrm{the radial limit}\ \phi(\xi)\  \textrm{exists}\}$
and 
\[
E_\varepsilon = \{\xi \in A: \vert \phi(\xi) - 1 \vert < \varepsilon\}
\]
for $\varepsilon > 0$. Recall that $\mathbb T \setminus A$ has measure zero.
The proof of Claim \ref{subsq} is an argument of gliding hump type based on
the following auxiliary observation. 

\begin{lemma}\label{aux1}
 Let $\phi$ and $(g_{a_{n}})$ be as above, where $a_n \to 1$ as $n \to \infty$.
 Then 
 \begin{itemize}
\item[(L1)] $\int_{\mathbb T \setminus E_{\varepsilon}} \vert C_\phi(g_{a_{n}})\vert^p dm \to 0$ as 
$n \to \infty$ for each fixed $\varepsilon > 0$,

\item[(L2)] $\int_{E_{\varepsilon}} \vert C_\phi(g_{a_{n}})\vert^p dm \to 0$ as $\varepsilon \to 0$ 
for each fixed $n \in \mathbb N$.
\end{itemize}
\end{lemma}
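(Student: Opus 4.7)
My plan is to derive both assertions from the explicit pointwise identity
\[
|C_\phi(g_{a_n})(\xi)|^p = \frac{1 - |a_n|^2}{|1 - \bar a_n \phi(\xi)|^2},
\]
valid for a.e.\ $\xi \in \mathbb T$. This identity is available because $g_{a_n}$ extends continuously (indeed holomorphically) to a neighbourhood of $\overline{\UnitDisk}$, its pole $1/\bar a_n$ lying outside $\overline{\UnitDisk}$. Hence the radial boundary value of $C_\phi(g_{a_n})$ at any $\xi \in A$ is simply $g_{a_n}(\phi(\xi))$; taking moduli with the standard branch yields the displayed formula.

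For (L1) I will obtain a uniform vanishing estimate on the integrand over $\mathbb T \setminus E_\varepsilon$. Fix $\varepsilon > 0$. For $\xi \in A \setminus E_\varepsilon$ one has $|1 - \phi(\xi)| \geq \varepsilon$, and using $|\phi(\xi)| \leq 1$ and $a_n \to 1$ the triangle inequality gives
\[
|1 - \bar a_n \phi(\xi)| \geq |1 - \phi(\xi)| - |1 - \bar a_n| \geq \varepsilon/2
\]
for all $n$ sufficiently large. Therefore the integrand is dominated by $4(1 - |a_n|^2)/\varepsilon^2$ uniformly in $\xi$, and integration over a set of total mass at most $1$ yields (L1).

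For (L2) I will apply dominated convergence. Since $g_{a_n} \in H^p$ and $C_\phi$ is bounded on $H^p$, the function $|C_\phi(g_{a_n})|^p$ lies in $L^1(\mathbb T, m)$ and serves as an integrable dominating function. The sets $E_\varepsilon$ decrease as $\varepsilon \downarrow 0$ to
\[
\bigcap_{\varepsilon > 0} E_\varepsilon = \{\xi \in A : \phi(\xi) = 1\},
\]
and I would verify that this intersection has measure zero. For non-constant $\phi$ this is Privalov's uniqueness theorem applied to $\phi - 1$; for $\phi \equiv c$ with $c \in \UnitDisk$ one has $c \neq 1$, so the intersection is empty. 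Consequently $|C_\phi(g_{a_n})|^p \chi_{E_\varepsilon} \to 0$ a.e., and (L2) follows.

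Neither step is technically demanding; the essential point is structural. The sets $E_\varepsilon$ have been designed precisely to isolate the potentially singular behaviour of $g_{a_n}$, whose mass concentrates near $1$ as $a_n \to 1$, inside the boundary preimage of a small neighbourhood of $1$. Off that neighbourhood the uniform factor $(1 - |a_n|^2)$ delivers the decay in (L1); on it, the boundary singularity ultimately sits on a null set, giving (L2). The only mildly nontrivial input is Privalov's theorem in step (L2).
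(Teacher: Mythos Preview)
Your proposal is correct and follows essentially the same approach as the paper: the same pointwise identity, the same triangle-inequality lower bound $|1-\bar a_n\phi(\xi)|\ge\varepsilon/2$ for (L1), and the same observation that $\bigcap_{\varepsilon>0}E_\varepsilon=\{\xi\in A:\phi(\xi)=1\}$ is null for (L2). You are slightly more explicit in citing Privalov's theorem and in handling the constant case (which cannot actually occur here, since $C_\phi\notin\mathcal K(H^p)$ forces $\phi$ non-constant), but otherwise the arguments coincide.
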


\begin{proof}
Observe first that 
\[
\int_{E_{\varepsilon}} \vert C_\phi(g)\vert^p dm \to 0
\]
as $\varepsilon \to 0$ for any $g \in H^p$, since $\cap_{\varepsilon > 0} E_{\varepsilon}
= \{\xi \in A: \phi(\xi) = 1\}$ has measure $0$ as $\phi$ is  non-constant. 
Moreover, if $\varepsilon > 0$ is fixed and
$\xi \in A \setminus E_{\varepsilon}$, then there is $n_{\varepsilon}$ such that
\[
\vert 1- \overline{a_n}\phi(\xi) \vert = \vert 1 - \phi(\xi) + \phi(\xi)(1 - \overline{a_n})\vert
\ge \vert 1 - \phi(\xi) \vert - \vert 1 - a_n\vert  >  \varepsilon/2 
\]
for all $n \ge n_{\varepsilon}$. It follows that 
\[
\vert C_\phi(g_{a_{n}})(\xi) \vert^p =  
\frac{1 - \vert a_n\vert^2}{\vert 1- \overline{a_n}\phi(\xi) \vert^2}
 \le 
\frac{4(1 - \vert a_n\vert^2)}{\varepsilon^2},
\]
so that (L1) holds as $n \to \infty$.
\end{proof}

To continue the argument of  Claim \ref{subsq} recall that 
$\int_{\mathbb T} \vert C_\phi(g_{a_{n}}) \vert^p dm \ge  d^p > 0$ by condition 
\eqref{Sh2}.  We  may then use  Lemma \ref{aux1} inductively 
to find indices  $j_1 < j_2 < \ldots$
and a decreasing sequence  $\varepsilon_j > \varepsilon_{j+1} \to 0$ so that 

\smallskip

(i) $\Big(\int_{E_{\varepsilon_{n}}} \vert C_\phi(g_{a_{j_{k}}})\vert^p dm\Big)^{1/p}  < 4^{-n}\delta d$ 
for all $k = 1,\ldots, n-1$,

(ii)  $\Big(\int_{\mathbb T \setminus E_{\varepsilon_{n}}} \vert C_\phi(g_{a_{j_{n}}})\vert^p dm 
\Big)^{1/p} < 4^{-n}\delta d$,

(iii) $\Big(\int_{E_{\varepsilon_{n}}} \vert C_\phi(g_{a_{j_{n}}})\vert^p dm \Big)^{1/p} > d/2$

\smallskip

\noindent for all $n \in \mathbb N$.   Here $\delta > 0$ is a small enough constant 
(to be chosen later). 
In fact, suppose that we have already found $a_{j_{1}}, \ldots , a_{j_{n-1}}$ and
$\varepsilon_1 > \ldots > \varepsilon_{n-1}$ satisfying (i) - (iii).
Then  property (L2) from  Lemma \ref{aux1} yields  
$\varepsilon_n < \varepsilon_{n-1}$ such that 
\[
\Big(\int_{E_{\varepsilon_{n}}} \vert C_\phi(g_{a_{j_{k}}})\vert^p dm \Big)^{1/p} <  4^{-n}\delta d
\]
for each $k = 1, \ldots , n-1$. After this use property (L1) from  Lemma \ref{aux1} together with 
\eqref{Sh2}  to find an index $j_n > j_{n-1}$ so that conditions (ii) and (iii) are satisfied for
the set $E_{\varepsilon_{n}}$.

In the interest of notational simplicity we  relabel $a_{j_{n}}$ as $a_{n}$ for 
$n \in \mathbb N$.  The idea of the argument is that the sequence
$(C_\phi(g_{a_{n}}))$ essentially 
resemble disjointly supported peaks  in $L^p(\mathbb T, m)$ close to the point $1$.  
We will next verify the left-hand inequality in \eqref{est} 
by a direct  perturbation argument.
Let  $b = (b_j) \in \ell^p$ be arbitrary.  Our starting point will be the identity
 \begin{equation}\label{sp}
\biggl\|  \sum_{j=1}^\infty b_j C_\phi(g_{a_{j}}) \biggl\|_p^p = 
 \sum_{n=0}^\infty \int_{E_{\varepsilon_{n}} \setminus E_{\varepsilon_{n+1}}} 
 \biggl|  \sum_{j=1}^\infty b_j C_\phi(g_{a_{j}})  \biggl|p dm,
 \end{equation}
 where we set $E_{\varepsilon_{0}} = \mathbb T$.
 
 Observe first that for each $n \in \mathbb N$ we get that
 \begin{align*}\label{est3}
 \Bigg(\int_{E_{\varepsilon_{n}} \setminus E_{\varepsilon_{n+1}}} 
 \vert C_\phi(g_{a_{n}})\vert^p dm\Bigg)^{1/p} & =  \Bigg(\int_{E_{\varepsilon_{n}}} 
 \vert C_\phi(g_{a_{n}})\vert^p dm -  \int_{E_{\varepsilon_{n+1}}} 
 \vert C_\phi(g_{a_{n}})\vert^p dm\Bigg)^{1/p}\\
 &  > \bigg( (\frac{d}{2})^p - (4^{-n-1}\delta d)^p\bigg)^{1/p} \ge
 \frac{d}{2} - 4^{-n-1}\delta d
 \end{align*}
 in view of (i) and (iii), where the last estimate holds because $0 < 1/p \le 1$.
Moreover, note that  
 \begin{equation*}\label{est4}
 \Bigg( \int_{E_{\varepsilon_{n}} \setminus E_{\varepsilon_{n+1}}} 
 \vert C_\phi(g_{a_{j}})\vert^p dm \Bigg)^{1/p} < 2^{-n-j}\delta d
 \end{equation*} 
  for all $j \neq n$. In fact, 
  $\Big( \int_{E_{\varepsilon_{n}} \setminus E_{\varepsilon_{n+1}}} 
 \vert C_\phi(g_{a_{j}})\vert^p dm \Big)^{1/p}$ is dominated by $4^{-n}\delta d$ for 
$j < n$ and by $4^{-j}\delta d$ for $j > n$ in view of (i) and (ii). 

Thus we get from the triangle inequality in $L^p$, 
together with the preceding estimates,
that for all $n \in \mathbb N$
one has
\begin{align*}
\Big( & \int_{E_{\varepsilon_{n}} \setminus E_{\varepsilon_{n+1}}}  \vert \sum_{j=1}^\infty b_j 
 C_\phi(g_{a_{j}}) \vert^p  dm \Big)^{1/p}  \\
 &  \ge  
 \vert b_n\vert \Bigg( \int_{E_{\varepsilon_{n}} \setminus E_{\varepsilon_{n+1}}} 
 \vert C_\phi(g_{a_{n}})\vert^p dm \Bigg)^{1/p} 
  \quad -
 \sum_{j \neq n}  \vert b_j\vert \Bigg( \int_{E_{\varepsilon_{n}} \setminus E_{\varepsilon_{n+1}}} 
 \vert C_\phi(g_{a_{j}})\vert^p dm \Bigg)^{1/p} \\
 & \ge   \vert b_n\vert (\frac{d}{2} - 4^{-n-1}d\delta) - 2^{-n} \delta d \Vert b\Vert_p
 \ge \frac{d}{2} \vert b_n\vert -  2^{-n+1} \delta d \Vert b\Vert_p. 
\end{align*}
By summing over $n$ we get from the disjointness and  the triangle inequality in $\ell^p$ that
\begin{align*}
\biggl\|  \sum_{j=1}^\infty b_j C_\phi(g_{a_{j}}) \biggl\|_p  & \ge 
\Big( \sum_{n=1}^\infty   \biggl|\ \frac{d}{2} \vert b_n\vert - 
2^{-n+1} \delta d  \Vert b\Vert_p\  \biggl| ^p \Big)^{1/p} \\
 & \ge \frac{d}{2} \Big(\sum_{n=1}^\infty \vert b_n\vert^p\Big)^{1/p} - 
 \delta d  \Vert b\Vert_p \Big(\sum_{n=1}^{\infty} 2^{-(n-1)p}\big)^{1/p} \\
 & \ge d \Big(\frac{1}{2} - \delta \cdot (1 - 2^{-p})^{1/p} \Big) \Vert b\Vert_p
\ge \frac{d}{4}\Vert b\Vert_p,  
\end{align*}
where the last estimate holds once we choose $\delta > 0$ small enough,
 so that  $\delta \cdot (1 - 2^{-p})^{1/p} < 1/4$.

The proof of the right-hand inequality in \eqref{est} is a straightforward variant of the
preceding estimates. This inequality
does not affect the choice of $\delta > 0$, 
and hence the details will be omitted here.
\end{proof}

\begin{remarks}\label{qnorm}
The  definitions of the classes $\mathcal K(H^p)$, $\mathcal S(H^p)$ and 
$\mathcal S_p(H^p)$ also make sense in the range $0 < p < 1$, where  $H^p$ are 
only quasi-Banach spaces.
The composition operators $C_\phi$ are continuous on $H^p$
for $0 < p < 1$, and Theorem \ref{Main1} 
 as well as Corollary \ref{main} remain true here. The argument is similar
to the above, but the quasi-norms $\Vert \cdot\Vert_p$
in $H^p$ as well as in $\ell^p$ are only $p$-norms, that is,
$\Vert f+g\Vert_p^p \le \Vert f\Vert_p^p + \Vert g\Vert_p^p$ for $f, g \in H^p$. 
This will affect a few constants when applying the triangle inequalities 
as in the proof of Theorem \ref{Main1}. However, 
we leave the precise details for $0 < p < 1$ to the interested reader, since  
a full analogue of Theorem \ref{Main0} appears out of reach. In fact,
$H^p$ and $L^p$ are not isomorphic for $0 < p < 1$ 
(see e.g. \cite[p. 35]{KPR}), the structure of their respective subspaces differs
(see e.g. \cite[chapter 3.2]{KPR}), and 
no version of  \eqref{Weis} appears known.
\end{remarks}

\begin{remarks}\label{comp}
It is possible to ensure in the proof of Theorem \ref{Main1} that the subspaces 
$M = \overline{\operatorname{span}}\{g_{a_{j}}: j \in \mathbb N\}$ 
and $C_\phi(M)$ are both complemented in $H^p$ 
(this also follows from the general result in \cite{W} for $1 < p < \infty$).
For this one uses the fact that the closed linear span of 
a disjointly supported sequence
is complemented in $L^p(\mathbb T, m)$,
a classical perturbation argument (cf. \cite[1.a.9]{LT}), 
as well as the complementation of  $H^p \subset L^p(\mathbb T, m)$
for $1 < p < \infty$.
Note also that for $p = 1$ the argument of  Theorem \ref{Main1} 
provides an alternative route
to the weak compactness characterisation of Sarason \cite{S1} 
cited in Section \ref{intro}. In fact,  if 
$C_\phi \notin \mathcal K(H^1)$, then $C_\phi$ fixes a copy of the non-reflexive space
$\ell^1$ by Theorem \ref{Main1}, whence
$C_\phi$ is not a weakly compact operator  $H^1 \to H^1$.  
\end{remarks}

\section{Proof of Theorems \ref{Main2} and \ref{Main3}}\label{Proof2}

The proof of Theorem \ref{Main2} is contained in the following three results.
We first look separately at the case $p = 2$.
Recall our notation  $E_\phi = \{ e^{i\theta} : |\phi(e^{i\theta})| = 1 \}$ for
analytic maps $\phi: \mathbb D \to \mathbb D$.

\begin{lemma} \label{le:phipowers}
Suppose that condition \eqref{CM} fails, that is, $m(E_\phi) > 0$.
Then there exist integers $0 \leq n_1 < n_2 < \cdots$ and a constant
$K > 0$ such that
\[
   K^{-1} \cdot \|c\|_{\ell^2}
   \leq \biggl\| \sum_{k=1}^\infty c_k\phi^{n_k} \biggr\|_{2}
   \leq K \cdot \|c\|_{\ell^2}
\]
for all $c = (c_k) \in \ell^2$.
\end{lemma}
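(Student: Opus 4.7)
The plan is to extract a subsequence of the monomial powers $(\phi^n)_{n \ge 0}$ that is almost orthogonal in $H^2$, so that its closed linear span is isomorphic to $\ell^2$. For this I would establish three properties of $(\phi^n)$: uniform upper and lower bounds on the $H^2$ norms, weak convergence to zero, and then select indices by a standard diagonal procedure that makes the off-diagonal Gram entries summable.

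The norm bounds come directly from the hypothesis. Since $|\phi| \le 1$ a.e.\ on $\mathbb T$ with equality precisely on $E_\phi$, one has
\[
m(E_\phi) \;\le\; \int_{\mathbb T} |\phi|^{2n}\, dm \;=\; \|\phi^n\|_{2}^2 \;\le\; 1
\]
for every $n \ge 0$. Weak convergence to $0$ follows because $|\phi(z)| < 1$ on $\mathbb D$ forces $\phi^n(z) \to 0$ pointwise; as $(\phi^n)$ is norm-bounded and the reproducing kernels $K_z(w) = (1 - \overline{z}w)^{-1}$ have dense linear span in $H^2$, this pointwise decay upgrades to $\phi^n \rightharpoonup 0$ weakly.

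Given $\delta > 0$, I would then select $0 \le n_1 < n_2 < \cdots$ inductively so that
\[
|\langle \phi^{n_k}, \phi^{n_j}\rangle| \;<\; \delta \cdot 2^{-k-j} \quad \text{for all } j < k.
\]
This is possible at each stage because weak nullity gives $\langle \phi^n, \phi^{n_j}\rangle \to 0$ as $n \to \infty$ for each fixed $j$. Expanding
\[
\biggl\| \sum_k c_k \phi^{n_k} \biggr\|_2^2 = \sum_k |c_k|^2 \|\phi^{n_k}\|_2^2 + \sum_{k \neq j} c_k \overline{c_j} \langle \phi^{n_k}, \phi^{n_j}\rangle,
\]
the diagonal lies between $m(E_\phi)\|c\|_{\ell^2}^2$ and $\|c\|_{\ell^2}^2$, while Cauchy--Schwarz applied to the off-diagonal weights gives
\[
\biggl|\sum_{k \neq j} c_k \overline{c_j}\langle \phi^{n_k}, \phi^{n_j}\rangle\biggr| \;\le\; \delta \biggl(\sum_k 2^{-k}|c_k|\biggr)^{\!2} \;\le\; C\delta \|c\|_{\ell^2}^2
\]
for an absolute constant $C$. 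Choosing $\delta$ so that $C\delta < m(E_\phi)/2$ delivers the desired two-sided equivalence with $K = \sqrt{2/m(E_\phi)}$.

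Since the argument is a clean Hilbert-space perturbation, and since the hypothesis $m(E_\phi) > 0$ enters only through the uniform lower bound $\|\phi^n\|_2 \ge \sqrt{m(E_\phi)}$, I do not anticipate a serious obstacle. The only point requiring care is the final tuning of $\delta$ against $m(E_\phi)$, and this is a routine computation.
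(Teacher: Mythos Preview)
Your proof is correct and follows essentially the same route as the paper: both arguments exploit the weak nullity of $(\phi^n)$ in $H^2$ to extract a subsequence whose Gram matrix is a small perturbation of a diagonal matrix with entries bounded below by $m(E_\phi)$. The only cosmetic differences are that the paper obtains the upper estimate directly from the boundedness of $C_\phi$ on $H^2$ together with the orthonormality of $(z^n)$, and it fixes the off-diagonal threshold as $2^{-2k} m(E_\phi)$ at the outset rather than introducing an auxiliary parameter $\delta$.
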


\begin{proof}
The upper estimate follows from the boundedness of $C_\phi$ on $H^2$
and the orthonormality of the sequence  $(z^n)$ in $H^2$.

To establish the lower estimate, note that $z^n \to 0$ weakly and
therefore also $\phi^n = C_\phi(z^n) \to 0$ weakly in $H^2$ as $n\to\infty$.
Hence we may set $n_1 = 0$ and then proceed inductively to pick increasing indices $n_k$
such that the inner-products satisfy 
$|(\phi^{n_j},\phi^{n_k})| \leq 2^{-2k} m(E_\phi)$ for all
$1\leq j < k$ and each $k \in \mathbb N$. Let $c=(c_k) \in \ell^2$ be arbitrary
and note that
\[
   \biggl\| \sum_{k=1}^\infty c_k\phi^{n_k} \biggr\|_{2}^2
   = \sum_{k=1}^\infty |c_k|^2\|\phi^{n_k}\|_{2}^2 +
     2 \Re \sum_{k=1}^\infty\sum_{j=1}^{k-1} c_j \bar{c_k}
           (\phi^{n_j},\phi^{n_k}).
\]
Obviously $\|\phi^{n_k}\|_{2}^2 \geq \int_{E_{\phi} } \vert  \phi^{n_k} \vert^2 dm =  
m(E_\phi)$ for each $k$.
Moreover, we get that
\[ \begin{split}
   \biggl| \sum_{k=1}^\infty & \sum_{j=1}^{k-1} c_j \bar{c_k}
     (\phi^{n_j},\phi^{n_k}) \biggr|
   \leq \|c\|_{\ell^2}^2 \sum_{k=1}^\infty\sum_{j=1}^{k-1}
     2^{-2k} m(E_\phi)  \\
   &\leq \tfrac{1}{2} \|c\|_{\ell^2}^2 m(E_\phi)
     \sum_{k=1}^\infty\sum_{j=1}^{k-1} 2^{-k}2^{-j}  
   = \tfrac{1}{6} \|c\|_{\ell^2}^2 m(E_\phi).
\end{split} \]
By combining these estimates we obtain the desired lower bound 
\[
   \biggl\| \sum_{k=1}^\infty c_k\phi^{n_k} \biggr\|_{2}^2
   \geq \|c\|_{\ell^2}^2 m(E_\phi)
        - \tfrac{1}{2}\|c\|_{\ell^2}^2 m(E_\phi)
   = (\tfrac{1}{2} m(E_\phi)) \|c\|_{\ell^2}^2 .
\]
\end{proof}

In order to treat general $p \in [1,\infty)$ recall that the analytic map 
$f: \mathbb D \to \mathbb C$  belongs to  $\BMOA$ if 
\[
\vert f \vert_{*}= \sup_{a \in \mathbb D} \Vert f \circ \sigma_a - f(a)\Vert_2 < \infty,
\]
where $\sigma_a(z) = \frac{a-z}{1 - \overline{a}z}$ is the Möbius-automorphism of  $\UnitDisk$
interchanging $0$ and $a$ for $a \in \UnitDisk$. The Banach space $\BMOA$ is normed by
$\Vert f \Vert_{\BMOA} = \vert f(0) \vert + \vert f\vert_{*}$.
Moreover, $\VMOA$ is the closed subspace of $\BMOA$, where $f \in \VMOA$ if
\[
\lim_{\vert a\vert \to 1} \Vert f \circ \sigma_a - f(a)\Vert_2 = 0.
\]
We refer to e.g. \cite{Ga} and \cite{G} for background on $\BMOA$.
It follows readily from Littlewood's subordination 
 theorem that  $C_\phi$ is bounded $\BMOA \to \BMOA$
for any analytic map $\phi: \mathbb D \to \mathbb D$, see e.g. \cite[p. 2184]{BCM}. 

The following proposition establishes one implication of Theorem \ref{Main2}.

\begin{proposition}\label{s2}
Let $1 \leq p < \infty$ and suppose that  $m(E_\phi) > 0$.
Then there exist increasing integers
$0 \le n_1 < n_2 < \cdots$ such that the subspace
\[
M = \overline{\operatorname{span}}\{z^{n_k}: k \geq 1\} \subset H^p
\]
is isomorphic to $\ell^2$ and the restriction ${C_\phi}_{|M}$ is bounded below on $M$.
Hence $C_\phi \notin \mathcal S_2(H^p)$.
\end{proposition}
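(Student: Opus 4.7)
The plan is to pick the increasing sequence $(n_k)$ inductively so that it satisfies two tail conditions simultaneously: the near-orthogonality estimate $|(\phi^{n_j},\phi^{n_k})_{H^2}|\le 2^{-2k}m(E_\phi)$ for $j<k$ that drives the proof of Lemma~\ref{le:phipowers}, and the Hadamard gap condition $n_{k+1}\ge 2 n_k$. Both are compatible tail constraints: once $n_1<\cdots<n_{k-1}$ are chosen, the weak convergence $\phi^n\to 0$ in $H^2$ lets us take $n_k$ large enough to meet them at step $k$.

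With such a choice, the classical Paley--Zygmund theorem on Hadamard lacunary Fourier series furnishes constants $A_p,B_p>0$ with
\[
  A_p\,\|c\|_{\ell^2}\;\le\;\Bigl\|\sum_k c_k z^{n_k}\Bigr\|_{p}\;\le\;B_p\,\|c\|_{\ell^2},\qquad (c_k)\in\ell^2,
\]
for every $0<p<\infty$. Thus $(c_k)\mapsto\sum_k c_k z^{n_k}$ is an isomorphism from $\ell^2$ onto $M\subset H^p$, settling the structural part of the claim.

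What remains is to bound $\|C_\phi(\sum_k c_k z^{n_k})\|_p=\|\sum_k c_k\phi^{n_k}\|_p$ from below by a multiple of $\|c\|_{\ell^2}$. Lemma~\ref{le:phipowers} provides this at the level of $H^2$: $\|\sum_k c_k\phi^{n_k}\|_2\ge K^{-1}\|c\|_{\ell^2}$. For $p\ge 2$ the proof is immediate, since the inclusion $L^p(\mathbb{T},m)\hookrightarrow L^2(\mathbb{T},m)$ on the probability space $(\mathbb{T},m)$ yields $\|f\|_p\ge \|f\|_2$ for every $f\in H^p$.

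The main obstacle is the range $1\le p<2$, where the above norm comparison goes the wrong way. I would handle it by log-convexity interpolation: choose any $q>2$ and $\lambda\in(0,1)$ with $\tfrac{1}{2}=\tfrac{\lambda}{p}+\tfrac{1-\lambda}{q}$, so that
\[
  \Bigl\|\sum_k c_k\phi^{n_k}\Bigr\|_{2}\;\le\;\Bigl\|\sum_k c_k\phi^{n_k}\Bigr\|_{p}^{\lambda}\Bigl\|\sum_k c_k\phi^{n_k}\Bigr\|_{q}^{1-\lambda}.
\]
The upper estimate $\|\sum_k c_k\phi^{n_k}\|_{q}\le \|C_\phi\|_{H^q\to H^q}\cdot B_q\,\|c\|_{\ell^2}$ follows from boundedness of $C_\phi$ on $H^q$ together with Paley--Zygmund applied at exponent $q$, while Lemma~\ref{le:phipowers} supplies the lower bound at exponent $2$. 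Substituting both into the interpolation inequality and solving for $\|\sum_k c_k\phi^{n_k}\|_p$ produces a constant $c>0$, depending only on $p,q,K,\phi$, such that $\|\sum_k c_k\phi^{n_k}\|_p\ge c\,\|c\|_{\ell^2}$. Combined with the Paley--Zygmund upper bound on $\|\sum_k c_k z^{n_k}\|_p$, this shows $C_\phi|_M$ is bounded below, so the copy $M\approx\ell^2$ witnesses $C_\phi\notin\mathcal{S}_2(H^p)$.
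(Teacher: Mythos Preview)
Your proof is correct. For $p\ge 2$ it coincides with the paper's argument. For $1\le p<2$ you take a genuinely different route: the paper bounds $\bigl\|\sum_k c_k\phi^{n_k}\bigr\|_{\BMOA}$ above via Paley's theorem in $\BMOA$ and the boundedness of $C_\phi$ on $\BMOA$, and then combines this with the $H^1$--$\BMOA$ duality inequality $\|f\|_2^2\le C\,\|f\|_{\BMOA}\|f\|_1$ to obtain the lower bound at $p=1$ (hence at all $p\ge 1$). Your log-convexity interpolation between $L^p$ and $L^q$ for an auxiliary $q>2$ accomplishes the same thing while staying entirely inside the $H^r$ scale and using only Littlewood subordination and Paley's theorem there; this is more self-contained and avoids the $\BMOA$ machinery. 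The paper's route, on the other hand, yields the sharper endpoint estimate at $p=1$ directly and is later recycled almost verbatim to prove the $\BMOA$/$\VMOA$ analogue in Proposition~\ref{BMOA}, which your approach would not immediately give.
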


\begin{proof}
We start by choosing the increasing integers $(n_k)$ as in Lemma~\ref{le:phipowers}.
By passing to a subsequence we may also assume that $(z^{n_k})$ is a
lacunary sequence, that is, \ $\inf_k (n_{k+1}/n_k) > 1$. Paley's theorem 
(see e.g. \cite[p. 104]{D}) implies that for $1 \le p < \infty$ the sequence 
$(z^{n_k})$ is equivalent in $H^p$ to
the unit vector basis basis of $\ell^2$, that is,
\begin{equation}\label{Pa}
\bigg\| \sum_{k=1}^\infty c_k z^{n_k} \bigg\|_{p} \sim \|c\|_{\ell^2}
\end{equation}
for all $c = (c_k) \in \ell^2$. 
(Here, and in the sequel, we use $\sim$ as a short-hand 
notation for the equivalence of the 
respective norms.)

\emph{Case $p \geq 2$.}
By Hölder's inequality
and Lemma~\ref{le:phipowers} we have that
\[
   \biggl\| C_\phi\biggl(\sum_{k=1}^\infty c_k z^{n_k} \biggr)\biggr\|_{p}
   =    \biggl\| \sum_{k=1}^\infty c_k \phi^{n_k} \biggr\|_{p}
   \geq \biggl\| \sum_{k=1}^\infty c_k \phi^{n_k} \biggr\|_{2}
   \sim \|c\|_{\ell^2}.
\]
According to  \eqref{Pa} and the boundedness of $C_\phi$ 
this proves the claim for $p \geq 2$.

\smallskip

\emph{Case $1 \leq p < 2$.}
We start by invoking a version of Paley's theorem for $\mathit{BMOA}$
(see e.g. \cite[Sec.~9]{G}), which together with the boundedness of
$C_\phi$ on $\mathit{BMOA}$ ensures that
\[
   \biggl\| \sum_{k=1}^\infty c_k \phi^{n_k} \biggr\|_{\mathit{BMOA}}
   \leq \Vert C_\phi\Vert \cdot  \biggl\| \sum_{k=1}^\infty c_k z^{n_k} \biggr\|_{\mathit{BMOA}}
   \leq K \cdot \Vert C_\phi\Vert \cdot \|c\|_{\ell^2}
\]
for all $c = (c_k) \in \ell^2$ and a uniform constant $K > 0$. In view of
Fefferman's $H^1$-$\mathit{BMOA}$ duality pairing (see e.g. \cite[Sec.~7]{G}) 
we may further estimate
\[ \begin{split}
   \biggl\| \sum_{k=1}^\infty c_k \phi^{n_k} \biggr\|_{\mathit{BMOA}}
   \biggl\| \sum_{k=1}^\infty c_k \phi^{n_k} \biggr\|_{1}
   &\geq  \biggl| \big( \sum_{k=1}^\infty c_k \phi^{n_k},
           \sum_{k=1}^\infty c_k \phi^{n_k} \big) \biggr|  \\
   &=  \biggl\| \sum_{k=1}^\infty c_k \phi^{n_k} \biggr\|_{2}^2
   \sim \|c\|_{\ell^2}^2,
\end{split} \]
where we again use Lemma~\ref{le:phipowers} at the final step.
By applying Hölder's inequality and combining the preceding estimates we obtain that
\[
   \biggl\| \sum_{k=1}^\infty c_k \phi^{n_k} \biggr\|_{p}
   \geq \biggl\| \sum_{k=1}^\infty c_k \phi^{n_k} \biggr\|_{1}
   \geq K' \|c\|_{\ell^2}
\]
for some uniform constant $K' > 0$. In particular, $C_\phi \notin \mathcal S_2(H^p)$
in view of \eqref{Pa},  which completes the verification of the proposition  for $1 \leq p < 2$.
\end{proof}

The converse implication in Theorem \ref{Main2} is contained in the following 

\begin{proposition}\label{ell2}
Let $1 \leq p < \infty$, $p \neq 2$, and suppose that $m(E_\phi) = 0$.
 If $(f_n)$ is any normalized sequence in $H^p$ which is equivalent to the unit vector basis
of $\ell^2$, then $C_\phi$ is not bounded  below on
$\overline{\operatorname{span}}\{f_n: n \in \mathbb N\} \subset H^p$. In particular,
$C_\phi \in \mathcal S_2(H^p)$. 
\end{proposition}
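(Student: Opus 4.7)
The plan is to argue by contradiction. Assume that $C_\phi$ is bounded below on $M := \overline{\operatorname{span}}\{f_n : n \in \mathbb N\}$, so that $(C_\phi f_n)$ is also equivalent to the unit vector basis of $\ell^2$ in $H^p$ and in particular $\|C_\phi f_n\|_p \ge c > 0$ for all $n$. I will derive a contradiction from the hypothesis $m(E_\phi) = 0$.

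First I claim that $C_\phi f_n \to 0$ almost everywhere on $\mathbb T$. Since the $\ell^2$-basis is weakly null, the isomorphism $M \approx \ell^2$ (together with Hahn--Banach) shows that $(f_n)$ is weakly null in $H^p$. Evaluation at any $z \in \mathbb D$ is a bounded functional on $H^p$, hence $f_n(z) \to 0$ for every $z \in \mathbb D$. The hypothesis $m(E_\phi) = 0$ gives $\phi(\xi) \in \mathbb D$ for a.e. $\xi \in \mathbb T$, so $(C_\phi f_n)(\xi) = f_n(\phi(\xi)) \to 0$ a.e.\ on $\mathbb T$.

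The main step is to establish that $(|C_\phi f_n|^p)$ is uniformly integrable on $\mathbb T$. Under the contradiction assumption, $N := \overline{\operatorname{span}}\{C_\phi f_n\}$ is a subspace of $L^p(\mathbb T, m)$ isomorphic to $\ell^2$; since $p \neq 2$, $N$ contains no copy of $\ell^p$. For $p > 2$ the Kadec--Pelczynski subspace theorem then forces the unit ball of $N$ to be $L^p$-equi-integrable. For $1 \le p < 2$ the analogous Kadec--Pelczynski result gives that the $L^p$- and $L^2$-norms are equivalent on $N$, and a Hölder estimate of the form $\int_A |f|^p\,dm \le \|f\|_2^p \, m(A)^{1-p/2}$ applied to normalized vectors in $N$ again produces the required equi-integrability.

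Combining the pointwise and equi-integrability statements via Vitali's convergence theorem yields $\|C_\phi f_n\|_p \to 0$, contradicting the lower bound $\|C_\phi f_n\|_p \ge c > 0$. Hence $C_\phi$ cannot be bounded below on $M$, which proves the proposition. I expect the main obstacle to be the uniform-integrability step, since it requires invoking the correct Kadec--Pelczynski-type statement separately in the two regimes $p > 2$ and $1 \le p < 2$; once that is in hand, Vitali's theorem forces the contradiction.
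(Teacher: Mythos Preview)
Your approach is correct and yields a valid proof, but it differs from the paper's route. The paper works with the sets $E_k = \{e^{i\theta}: |\phi(e^{i\theta})| \ge 1 - 1/k\}$: one has $\int_{E_k}|C_\phi f_n|^p\,dm \to 0$ as $k \to \infty$ for each $n$ (since $m(E_k) \to m(E_\phi) = 0$), and $\int_{\mathbb T \setminus E_k}|C_\phi f_n|^p\,dm \to 0$ as $n \to \infty$ for each $k$ (since $f_n \to 0$ uniformly on $\{|z| \le 1 - 1/k\}$). A gliding-hump argument parallel to the proof of Theorem~\ref{Main1} then produces a subsequence $(C_\phi f_{n_j})$ equivalent to the unit vector basis of $\ell^p$, which contradicts the assumed $\ell^2$-equivalence since $p \neq 2$. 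Your argument packages the same contradiction through Vitali's theorem instead. The paper's version is self-contained and reuses the machinery already set up in Section~\ref{Proof}; yours is shorter provided one is willing to quote Kadec--Pe{\l}czy\'nski-type results as a black box.

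One step in your outline needs tightening. For $1 \le p < 2$ you invoke an ``analogous Kadec--Pe{\l}czy\'nski result'' giving equivalence of the $L^p$- and $L^2$-norms on $N$; this is not a standard formulation for $p < 2$, and it is not clear that an arbitrary isomorphic copy of $\ell^2$ inside $L^p$ must lie in $L^2$ at all. The argument that works uniformly for every $p \neq 2$ is: if the unit ball of $N$ failed to be $p$-equi-integrable, then $N$ would not be strongly embedded in $L^p$, so one could find normalized $g_n \in N$ with $g_n \to 0$ in measure, and a standard disjointification (small perturbation of a disjointly supported sequence) would yield a subsequence equivalent to the $\ell^p$ basis --- impossible in $N \approx \ell^2$. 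Equivalently, your a.e.\ convergence already gives $C_\phi f_n \to 0$ in measure, and the same disjointification directly produces an $\ell^p$-subsequence; this is precisely the paper's contradiction, reached from a different direction.
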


\begin{proof}
Assume to the contrary that
\begin{equation}\label{eq:ell2}
   \biggl\| \sum_{n=1}^\infty c_n C_\phi(f_n) \biggr\|_{p}
   \sim \biggl\| \sum_{n=1}^\infty c_n f_n \biggr\|_{p}
   \sim \Vert c \Vert_{\ell^2}^2
\end{equation}
for all sequences $c = (c_n) \in \ell^2$. In particular, 
$\| C_\phi(f_n) \|_{p} \ge d > 0$ for all $n$ and some constant $d$.
We write $E_k = \{e^{i\theta}: |\phi(e^{i\theta})| \geq 1-\tfrac{1}{k}\}$ 
for $k \geq 1$. Since
$\lim_{k\to\infty} m(E_k) = m(E_\phi) = 0$, we get that
\[
   \lim_{k\to\infty} \int_{E_k} | C_\phi(f_n) |^p \,dm = 0
   \quad\text{for all $n$.}
\]
On the other hand, $f_n \to 0$ weakly in $H^p$ 
and hence $f_n \to 0$
uniformly on compact subsets of $\UnitDisk$ as $n \to \infty$. 
This implies that
\[
   \lim_{n\to\infty} \int_{\T\setminus E_k} | C_\phi(f_n) |^p \,dm = 0
   \quad\text{for all $k$.}
\]
By using the above properties and proceeding recursively in a similar fashion
to the  argument for Theorem \ref{Main1} in section \ref{Proof} we find
increasing sequences of integers $0 \le n_1 < n_2 < \cdots$ and $1 = k_1 < k_2 < \cdots$,
such that 
\[
   \biggl\| \sum_{j=1}^\infty c_j  C_\phi(f_{n_{j}}) \biggr\|_{p}^p
   = \sum_{l=1}^\infty \int_{E_{k_l}\setminus E_{k_{l+1}}}
     \biggl| \sum_{j=1}^\infty c_j C_\phi(f_{n_{j}}) \biggr|^p \,dm
   \sim \Vert c\Vert_{\ell^p}^p
\]
holds for all $c = (c_j) \in \ell^p$ with uniform constants.
However, for $p \neq 2$ such estimates obviously contradict \eqref{eq:ell2}. Thus
$C_\phi \in S_2(H^p)$, and this completes the proof
of the Proposition (and hence also of Theorem \ref{Main2}).
\end{proof}

We remind that Theorem \ref{Main2} does not hold for $p = 2$. 
The result easily yields very explicit examples of  operators
$C_\phi \in \mathcal S_2(H^p) \setminus \mathcal S_p(H^p)$.

\begin{example}\label{Ex}
Let $\phi(z) =  \frac{1}{2}(1+z)$ for $z \in \mathbb D$. Theorem \ref{Main2} 
implies that $C_\phi$ does not fix any copies of $\ell^2$ in $H^p$. 
On the other hand, it is well known 
that  $C_\phi \notin \mathcal K(H^p)$,
see e.g. \cite[Sec. 2.5]{Sh2}, so that $C_\phi$ does fix copies of $\ell^p$ in $H^p$
by Theorem \ref{Main1}.
\end{example}

We next prepare for the proof of Theorem \ref{Main3}. 
This involves the harmonic Hardy space $h^p$, that is, 
the space of complex-valued harmonic functions 
$f:  \UnitDisk \to \mathbb C$ normed by \eqref{Hnorm}.  Recall that for $1 < p < \infty$ 
there is a well-known isometric identification  $h^p = L^p(\mathbb T,m)$ as a
complex Banach space.
Here $f \in h^p$ corresponds to its a.e. radial limit function $f \in  L^p(\mathbb T,m)$,
whereas conversely $g \in  L^p(\mathbb T,m)$ determines the harmonic extension 
$P[g] \in h^p$ through the Poisson integral. Moreover,  
$h^p = H^p \oplus \overline{H^p_0}$,
where $H^p_0 = \{f \in H^p: f(0) = 0\}$ and $\overline{H^p_0} = \{ \overline{f}: f \in H^p_0\}$.

Let $\phi: \mathbb D \to \mathbb D$ be any analytic map. 
The Littlewood subordination theorem for subharmonic functions 
(see e.g. \cite[Thm. 2.22]{CM}) implies 
that the composition operator $f \mapsto f \circ \phi$ is also bounded $h^p \to h^p$
for $1 \le p < \infty$. 
It will be convenient in the argument to use the notation
$\widetilde{C_\phi}(f) = f \circ \phi$ for $f \in h^p$
 to distinguish the composition operator 
on $h^p$ from its relative  on $H^p$.
In particular, if in addition $\phi(0) = 0$, then we may decompose  
\begin{equation}\label{split}
\widetilde{C_\phi} =
\left( \begin{array}{ccc}
C_\phi & 0\\
0 & C_\phi\\
\end{array} \right), \quad \widetilde{C_\phi}(f,g) = (f \circ \phi, g \circ \phi),
\end{equation}
as a matrix direct sum   with respect to the decomposition 
$h^p = H^p \oplus \overline{H^p_0}$. Here  $\phi(0) = 0$ ensures 
that $g \circ \phi \in H^p_0$ for any $g \in H^p_0$.

\begin{proof}[Proof of Theorem \ref{Main3}] 
We may assume during the proof that $\phi(0)=0$. In fact, otherwise 
consider  $\psi = \sigma_{\phi(0)} \circ \phi$, where 
$\sigma_{\phi(0)}: \mathbb D \to \mathbb D$ is the automorphism 
interchanging $0$ and $\phi(0)$.
Then $\psi(0) = 0$ and
$\widetilde{C_{\psi}}  = \widetilde{C_\phi} \circ \widetilde{C}_{\sigma_{\phi(0)}}$, 
where $\widetilde{C}_{\sigma_{\phi(0)}}$ is a linear isomorphism $h^p \to h^p$ 
(as well as $H^p \to H^p$),
which does not affect any of the  claims of the theorem. 
 
The proof of the implication (iii) $\Rightarrow$ (i) is contained in the following claim.

\begin{clm}\label{harm} 
Let $1 < p < \infty$ and suppose that $m(E_\phi) > 0$.
Then $\widetilde{C_\phi} \notin \mathcal S_{L^p}(h^p)$, that is,  there is  a subspace
$M \subset h^p$, $M \approx L^p$, such that
$\widetilde{C_\phi}_{|M}$ is bounded  below.
\end{clm}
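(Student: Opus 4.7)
The plan is to identify $h^p$ isometrically with $L^p(\T,m)$ via boundary values (valid for $1<p<\infty$) and then to exhibit inside $L^p(\T,m)$ an explicit complemented copy of $L^p[0,1]$ on which $\widetilde{C_\phi}$ is bounded below. Throughout the argument I use the standard Fatou-type fact that for any $f\in h^p$ the boundary function $(f\circ\phi)^*$ equals $f^*\circ\phi^*$ almost everywhere on $E_\phi$ (recall that $|\phi^*(\xi)|=1$ a.e.\ on $E_\phi$).

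The decisive measure-theoretic step is to analyse the push-forward
\[
\nu(B) \;=\; m\bigl(E_\phi\cap\phi^{-1}(B)\bigr), \qquad B\subset\T\text{ Borel,}
\]
of total mass $\nu(\T)=m(E_\phi)>0$. Applying Littlewood's subordination principle to the bounded positive harmonic function $u=P[\mathbf{1}_B]$ and using $\phi(0)=0$, one has $(u\circ\phi)(0)=u(0)=m(B)$, while $(u\circ\phi)^*\ge 0$ on $\T$ and $(u\circ\phi)^*=\mathbf{1}_B\circ\phi^*$ almost everywhere on $E_\phi$ by the boundary identification above. The mean-value identity $(u\circ\phi)(0)=\int_{\T}(u\circ\phi)^*\,dm$ then yields L\"owner's inequality
\[
\nu(B) \;=\; \int_{E_\phi}\mathbf{1}_B\circ\phi^*\,dm \;\le\; \int_{\T}(u\circ\phi)^*\,dm \;=\; m(B),
\]
so $\nu\ll m$ and the Radon--Nikodym derivative $h=d\nu/dm$ satisfies $0\le h\le 1$ a.e. Since $\int_{\T}h\,dm = m(E_\phi)>0$, I can select a constant $c>0$ and a Borel set $B_c\subset\T$ with $m(B_c)>0$ and $h\ge c$ on $B_c$.

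I then let $M\subset L^p(\T,m)\equiv h^p$ consist of those boundary functions that vanish off $B_c$. The space $M$ is isometric to $L^p(B_c,m|_{B_c})\cong L^p[0,1]$ (since $m|_{B_c}$ is nonatomic) and is complemented in $h^p$ by the multiplication projection $f\mapsto \mathbf{1}_{B_c}f$. For $f\in M$ the boundary identification combined with $h\ge c$ on $B_c$ gives
\[
\|\widetilde{C_\phi}(f)\|_{h^p}^p \;\ge\; \int_{E_\phi}|f\circ\phi^*|^p\,dm \;=\; \int_{\T}|f|^p\,d\nu \;=\; \int_{B_c}|f|^p\,h\,dm \;\ge\; c\,\|f\|_{h^p}^p,
\]
so $\widetilde{C_\phi}|_M$ is bounded below, which is precisely the conclusion of the claim.

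The main technical input is L\"owner's absolute-continuity inequality $\nu\le m$, derived here as an immediate consequence of Littlewood's subordination principle applied to Poisson extensions of indicator functions. Once this is secured, the production of the complemented copy of $L^p[0,1]$ inside $h^p\cong L^p(\T)$ is routine, and no further properties of $\phi$ beyond $m(E_\phi)>0$ are needed.
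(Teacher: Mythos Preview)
Your proof is correct and follows essentially the same route as the paper: both define the push-forward measure $\nu$ on $\T$, establish $\nu\le m$ via the Poisson extension of indicator functions under the standing assumption $\phi(0)=0$, extract a set of positive measure where $d\nu/dm$ is bounded below, and take $M$ to be the $L^p$-functions supported there. Your version is slightly more explicit about the boundary identification $(f\circ\phi)^*=f^*\circ\phi^*$ on $E_\phi$ and about the complementation of $M$, but these are elaborations rather than departures from the paper's argument.
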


To prove the claim define the Borel measure $\nu$ on $\T$
by $\nu(A) = m((\phi)^{-1}(A))$. Then
$\nu$ is absolutely continuous:  if $A \subset \T$ is a Borel set
and $u_A = P[\chi_A]$ is the harmonic extension (i.e.\ the Poisson integral) of
$\chi_A$, we have that
\[
   \nu(A) = \int_{(\phi^*)^{-1}(A)} dm
      \leq \int_\T u_A \circ\phi \,dm
      = u_A(\phi(0)) = u_A(0) = m(A).
\]
Since $\nu(\T) = m(E_\phi) > 0$, it follows that the density  
$d\nu/dm \geq \delta$ for some $\delta > 0$ on a Borel set $F \subset \T$ of positive 
Lebesgue measure.

We may now choose $M = L^p(F,m)$. Indeed, given any $f \in L^p(F,m)$,
we have
\[
   \|\widetilde{C_\phi} f\|_{L^p}^p \geq \int_{E_\phi} |f\circ\phi|^p \,dm
   = \int_\T |f|^p \,d\nu \geq \delta \int_F |f|^p \,dm
   = \delta \|f\|_{L^p(F,m)}^p,
\]
which establishes Claim \ref{harm}, since $L^p(F,m) \approx L^p$.

The implication (ii) $\Rightarrow$ (iii) follows from \eqref{split} and the non-trivial result that
the class $\mathcal S_{L^p}(L^p) \approx \mathcal S_{L^p}(h^p)$ is additive, 
see \cite[p. 103 and 105]{DJS}.
In fact, if $C_\phi \in \mathcal S_{L^p}(H^p)$, then 
\[
\widetilde{C_\phi} = \left( \begin{array}{ccc}
C_\phi & 0\\
0 & 0\\
\end{array} \right) +
\left( \begin{array}{ccc}
0 & 0\\
0 & C_\phi\\
\end{array} \right)
\]
is the sum of two operators from  $\mathcal S_{L^p}(h^p)$, and hence $L^p$-singular
by additivity.

Finally, the proof of the implication  (i) $\Rightarrow$ (ii) is already contained in 
that of Proposition \ref{ell2}. In fact, 
if there is a subspace $M \subset H^p$, $M \approx L^p$, so that $C_\phi$ is an isomorphism 
$M \to C_\phi(M)$, then $C_\phi$ also fixes 
the isomorphic copies of $\ell^2$ contained in $M$. It was shown in Proposition \ref{ell2}
that the latter property is incompatible with \eqref{CM}. 
\end{proof}

We note that Claim \ref{harm} also holds for $p = 1$. However, 
there is no immediate analogue of Theorem \ref{Main3} for $H^1$. In fact, 
$\mathcal S_{L^1}(H^1) = \mathcal L(H^1)$, since $L^1$
does not embed isomorphically into $H^1$, see e.g. \cite[1.d.1]{LT}.

In conclusion, recall that there are infinitely many norm-closed  ideals 
$\mathcal I$ of $\mathcal L(H^p)$ 
satisfying  $\mathcal S_2(H^p) \subset \mathcal I \subset \mathcal S_{L^p}(H^p)$ 
for $1 < p < \infty$ and $p \neq 2$, see \cite[5.3.9]{P}.
By contrast,  Theorems \ref{Main2} and \ref{Main3} imply that
there is no corresponding gradation for composition operators on $H^p$. 
In some cases the trichotomy of Theorem \ref{Main0} can be sharpened by combining with known results about the subspaces of $H^p \approx L^p$. For instance, for $2 < p < \infty$ 
it follows from a result of Johnson and Odell \cite[Thm. 1]{JO} that if ${C_\phi}_{|M}$ is bounded below on an infinite-dimensional subspace $M \subset H^p$ that contains no 
isomorphic copies of $\ell^2$,
then $M$ embeds isomorphically into $\ell^p$, whence $C_\phi \notin \mathcal S_p(H^p)$.

\section{Concluding remarks and questions}\label{Con}

In this section we list some further examples of Banach spaces of analytic functions, where composition operators have related rigidity properties, and draw attention to open problems. We also sketch another approach towards Theorem \ref{Main1} which motivated this paper, though 
its conclusion is much weaker.

\subsection{Further rigidity properties.}\label{rigidity}

The weaker rigidity property 
\begin{equation}\label{rig}
C_\phi \in \mathcal S(E) \ \textrm{if and only if}\ C_\phi \in \mathcal K(E)
\end{equation}
holds for many other Banach spaces $E$ of 
analytic functions on $\mathbb D$ apart from the Hardy spaces.
The following list briefly recalls some cases. Typically these results were not stated
in terms of strict singularity, and as a rule they do not yield as precise information
as our results for $H^p$.

\smallskip

$\bullet$ 
The following dichotomy in \cite[Thm. 1]{BDL} is 
an explicit precursor of  Theorem \ref{Main1}:
\textit{either $C_\phi \in \mathcal K(H^\infty_v)$ or  $C_\phi \notin \mathcal S_\infty(H^\infty_v)$}.
Here $H^\infty_v$ is the weighted $H^\infty$-space  for a strictly positive 
weight function $v$ on  $\mathbb D$.  It is also possible to deduce 
versions of \eqref{rig} for $H^\infty$ (the case $v \equiv 1$)  
from even earlier results.
In fact, it follows from any of the references \cite{Ul}, \cite{ADL} or \cite{CD} that 
$C_\phi \in \mathcal L(H^\infty)$ is weakly compact 
if and only if $C_\phi \in \mathcal K(H^\infty)$. Moreover, 
 Bourgain \cite{B} established that 
$\mathcal W(H^\infty, X) = \mathcal S_\infty(H^\infty, X)$ for any Banach space $X$,
where $\mathcal W$ denotes the class of weakly compact operators.
Here $\mathcal K(H^\infty) \varsubsetneq \mathcal S(H^\infty)$, since this 
holds for the complemented subspace $\ell^\infty$ of $H^\infty$.

$\bullet$ 
The dichotomy in  Theorem \ref{Main1} holds  for arbitrary bounded operators on 
 the Bergman space $A^p$. In fact, $A^p \approx \ell^p$ for $1 \le p < \infty$ 
by a result of Lindenstrauss and Pe{\l}cynski, see \cite[Thm. III.A.11]{Wo}, whereas 
$\mathcal S(\ell^p) = \mathcal S_p(\ell^p) = \mathcal K(\ell^p)$ 
by a result of  Gohberg, Markus and Feldman,
see \cite[5.1-5.2]{P}.

$\bullet$ 
It is known that  the Bloch space $\mathcal B$ is isomorphic to $\ell^\infty$, while
$C_\phi \in \mathcal W(\mathcal B)$ if and only if $C_\phi \in \mathcal  K(\mathcal B)$, 
see e.g. \cite[Cor. 5]{LST}. Moreover, any 
$U \notin \mathcal W(\ell^\infty, X)$ fixes a copy of  $\ell^\infty$ 
for any Banach space $X$,  see \cite[2.f.4]{LT}. 
Consequently either $C_\phi \in \mathcal  K(\mathcal B)$ or 
$C_\phi \notin \mathcal S_\infty(\mathcal B)$.

$\bullet$ It follows from \cite[section 3]{LNST} that
$C_\phi \in \mathcal K(\BMOA)$ if and only of $C_\phi \in \mathcal S_{c_{0}}(\BMOA)$. 
In fact, the argument shows that if  $C_\phi \notin \mathcal K(\BMOA)$, 
then there is $M \subset \VMOA$, $M \approx c_0$, so that ${C_\phi}_{|M}$
is bounded below.  Here again
$\mathcal K(BMOA) \varsubsetneq   \mathcal S_{c_{0}}(\BMOA)$, since $\BMOA$ 
contains complemented subspaces isomorphic to $\ell^2$ in view of Paley's theorem
(see e.g. \cite[Thm. 9.2]{G}). 

\smallskip

Actually the results of section \ref{Proof2} combined with \cite{LNST} 
lead to a better understanding of the
$\ell^2$-singular composition operators on $\VMOA$ and $\BMOA$. 

\begin{proposition}\label{BMOA}
(i) If $\phi: \mathbb D \to \mathbb D$ is an analytic map, and 
$C_\phi \in \mathcal S_2(\BMOA)$, then \eqref{CM} holds (that is, $m(E_\phi) = 0$).

(ii) If $\phi \in \VMOA$, then $C_\phi \in \mathcal S_2(\VMOA)$ if (and only if) 
\eqref{CM} holds.
\end{proposition}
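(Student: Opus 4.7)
The plan for part (i) is to prove the contrapositive and construct, under the assumption $m(E_\phi) > 0$, an explicit copy $M \subset \BMOA$ of $\ell^2$ on which $C_\phi$ is bounded below. First I would apply Lemma~\ref{le:phipowers} to obtain integers $0 \le n_1 < n_2 < \cdots$ for which $(\phi^{n_k})$ is equivalent to the unit vector basis of $\ell^2$ in $H^2$. After passing to a lacunary subsequence, the version of Paley's theorem for $\BMOA$ recalled in the proof of Proposition~\ref{s2} yields
\[
\biggl\| \sum_{k=1}^\infty c_k z^{n_k} \biggr\|_{\BMOA} \sim \|c\|_{\ell^2},
\]
so that $M = \overline{\operatorname{span}}\{z^{n_k}\}$ is a copy of $\ell^2$ in $\BMOA$. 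The boundedness of $C_\phi$ on $\BMOA$ supplies the upper estimate for $\|C_\phi \sum c_k z^{n_k}\|_{\BMOA} = \|\sum c_k \phi^{n_k}\|_{\BMOA}$, while the continuous embedding $\BMOA \hookrightarrow H^2$ (which follows from $\|f-f(0)\|_2 \leq |f|_*$) combined with Lemma~\ref{le:phipowers} gives the matching lower bound
\[
\biggl\|\sum_{k=1}^\infty c_k \phi^{n_k}\biggr\|_{\BMOA} \gtrsim \biggl\|\sum_{k=1}^\infty c_k \phi^{n_k}\biggr\|_{H^2} \gtrsim \|c\|_{\ell^2}.
\]

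For the \emph{only if} direction of (ii), essentially the same construction works, provided I verify that the relevant vectors lie in $\VMOA$. Since $\phi(\UnitDisk) \subset \UnitDisk$ forces $\phi \in \VMOA \cap H^\infty$, and the latter is a subalgebra of $\BMOA$, the powers $\phi^{n_k}$ all belong to $\VMOA$. Moreover, lacunary series with $\ell^2$-coefficients are limits in $\BMOA$-norm of their polynomial partial sums, and hence already lie in $\VMOA$. Thus $M$ sits inside $\VMOA$, $C_\phi|_M : \VMOA \to \VMOA$ is bounded below, and $C_\phi \notin \mathcal S_2(\VMOA)$.

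The \emph{if} direction of (ii)---that $m(E_\phi) = 0$ forces $C_\phi \in \mathcal S_2(\VMOA)$---is the main obstacle, and the plan is to argue by contradiction: assume a normalized $\ell^2$-basic sequence $(f_n) \subset \VMOA$ with $\|C_\phi f_n\|_{\VMOA} \geq c > 0$. Being $\ell^2$-basic, $(f_n)$ is weakly null in $\VMOA$, hence weakly null in each $H^p$ via the continuous embedding, so $f_n \to 0$ uniformly on compact subsets of $\UnitDisk$ by the usual normal-family reasoning. The strategy is then to combine this vanishing on compacta with the smallness of $m(E_\phi)$---the same two ingredients that drove the gliding-hump argument of Proposition~\ref{ell2}---in order to transfer the contradiction to an $H^p$-setting with $p \neq 2$, where Theorem~\ref{Main2} applies. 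The key difficulty here is that the $\VMOA$-norm is a supremum of $H^2$-seminorms rather than an integral, so the splitting over the sets $E_k = \{|\phi| \geq 1 - 1/k\}$ used in Proposition~\ref{ell2} does not carry over verbatim; instead, one must pass to a further sub-subsequence (for instance of lacunary block type, where by Paley's theorem the $\VMOA$- and $H^p$-norms become comparable) on which the $\ell^2$-equivalence and the $C_\phi$-lower-bound both survive in $H^p$-norm, and then invoke Theorem~\ref{Main2} to reach the contradiction.
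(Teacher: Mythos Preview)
Your argument for (i) is correct and in fact more direct than the paper's: the paper obtains the lower bound $\bigl\|\sum_k c_k \phi^{n_k}\bigr\|_{\BMOA} \gtrsim \|c\|_{\ell^2}$ by recycling the $H^1$--$\BMOA$ duality estimate from Proposition~\ref{s2} together with an upper bound on $\bigl\|\sum_k c_k \phi^{n_k}\bigr\|_1$, whereas you simply use the embedding $\BMOA \hookrightarrow H^2$ and Lemma~\ref{le:phipowers}. Your handling of the ``only if'' part of (ii) is also fine.

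The genuine gap is in the ``if'' direction of (ii). Your plan to ``pass to a further sub-subsequence of lacunary block type'' so that the $\VMOA$- and $H^p$-norms become comparable, and then to invoke Theorem~\ref{Main2}, does not work: the sequence $(f_n)$ is an \emph{arbitrary} normalized $\ell^2$-basic sequence in $\VMOA$, not a sequence of monomials, and there is no mechanism that turns blocks of such a sequence into lacunary power series on which Paley's theorem would apply. The paper proceeds quite differently. The John--Nirenberg inequality gives a uniform bound $\|f_n\circ\phi\|_4 \le c\,\|f_n\circ\phi\|_{\BMOA}$; splitting $\|f_n\circ\phi\|_2^2$ over $E_k = \{|\phi|\ge 1-1/k\}$ and its complement, and applying H\"older on $E_k$ (using the $L^4$ bound and $m(E_k)\to 0$) together with the uniform convergence $f_n\to 0$ on compacta for the complement, yields $\|f_n\circ\phi\|_2 \to 0$. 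The decisive step is then an external input, \cite[Prop.~6]{LNST}: a seminormalized sequence in $\BMOA$ whose $H^2$-norms tend to zero has a subsequence equivalent to the unit vector basis of $c_0$. Applied to $(f_n\circ\phi)$ this contradicts the assumed $\ell^2$-equivalence. The two ingredients missing from your sketch are thus the John--Nirenberg inequality (to obtain uniform higher integrability, without which the $E_k$-splitting gives nothing) and the $c_0$-extraction principle from \cite{LNST}.
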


\begin{proof}
(i)  The argument  is essentially contained in that
 of Proposition \ref{s2}.
In fact, suppose that $m(E_\phi) > 0$, 
where $E_\phi = \{ e^{i\theta} : |\phi(e^{i\theta})| = 1 \}$.
Then the proof of the case $1 \le p < 2$ of Proposition \ref{s2}
gives  a lacunary sequence $(n_k)$ and constants $K_1, K_2 > 0$  so that in the 
$H^1$-$\mathit{BMOA}$ duality pairing
\[
   \biggl\| \sum_{k=1}^\infty c_k \phi^{n_k} \biggr\|_{\mathit{BMOA}}
   \biggl\| \sum_{k=1}^\infty  c_k \phi^{n_k} \biggr\|_{1} \geq K_1 \cdot \|c\|_{\ell^2}^2,
   \]
  as well as $\| \sum_{k=1}^\infty  c_k \phi^{n_k} \|_{1} \leq K_2 \cdot \|c\|_{\ell^2}$ for all 
  $c = (c_k) \in \ell^2$. Since $C_\phi$ is bounded on $\BMOA$ 
  it follows as before from Paley's theorem  in $\BMOA$
that $C_\phi$ is bounded below on 
  $\overline{\operatorname{span}}\{z^{n_{k}}: k \in \mathbb N\} \approx \ell^2$ in $\BMOA$.
  
  (ii) Recall that $C_\phi: \VMOA \to \VMOA$ if $\phi \in \VMOA$, see e.g. \cite[Prop. 2.3]{BCM}.
  Assume that $m(E_\phi) = 0$
  and suppose to the contrary that there is a normalised sequence $(f_k) \subset \VMOA$ 
equivalent to the unit vector basis of $\ell^2$, for which 
  \begin{equation}\label{VMOA}
  \biggl\| \sum_{k=1}^\infty  c_k C_\phi(f_k)  \biggl\|_{\BMOA} \sim \Vert c\Vert_{\ell^2}
  \end{equation}
  for all $ c = (c_k) \in \ell^2$. In particular, 
  $\Vert f_k \circ \phi \Vert_{\BMOA} \ge d > 0$ for all $k$, while $(f_k)$ is 
  weak-null sequence in $\VMOA$, so that $f_k \to 0$ uniformly on compact subsets of 
  $\mathbb D$ as $k \to \infty$. Moreover, by the John-Nirenberg inequality there
  is a uniform constant $c > 0$ so that
  \[
  \Vert f_k \circ \phi \Vert_{4} \le c \Vert f_k \circ \phi \Vert_{\BMOA}, \quad k \in \mathbb N.
  \]
 
 Let $E_k = \{ e^{i\theta} : |\phi(e^{i\theta})| \ge  1 - \frac{1}{k}\}$ for $k \in \mathbb N$. 
 From the above estimates and Hölder's inequality we get that
 \begin{align*}
  \Vert f_n \circ \phi \Vert_{2}^2 & = \int_{E_{k}}   \vert f_n \circ \phi \vert^2 dm + 
  \int_{\mathbb T \setminus E_{k}}   \vert f_n \circ \phi \vert^2 dm\\
  & \le \big(\int_{E_{k}}   \vert f_n \circ \phi \vert^4 dm\big)^{1/2} \sqrt{m(E_k)} 
  +   \int_{\mathbb T \setminus E_{k}}   \vert f_n \circ \phi \vert^2 dm. 
 \end{align*} 
 Since $\int_{\mathbb T \setminus E_{k}}   \vert f_n \circ \phi \vert^2 dm \to 0$ for each $k$
 as $n \to \infty$, we obtain that 
 \[
 \limsup_{n\to\infty}   \Vert f_n \circ \phi \Vert_{2}^2  \le C \sqrt{m(E_k)}
 \]
 for some constant $C > 0$ independent of $k \in \mathbb N$.  
 By letting $k \to \infty$ and using that $m(E_\phi) = 0$
 we deduce that  $\lim_{n \to \infty}  \Vert f_n \circ \phi \Vert_{2} = 0$.
 
By \cite[Prop. 6]{LNST}  there is  a subsequence $(f_{n_{k}} \circ \phi)$  such that
 \[
\Vert \sum_{k=1}^\infty c_k f_{n_{k}} \circ \phi \Vert_{\BMOA} \sim \Vert c \Vert_{\ell^\infty}
 \]
holds  for all $c = (c_k) \in c_0$. Obviously this contradicts \eqref{VMOA}.
\end{proof}

\subsection{An alternative approach}\label{alt}

We next indicate a different approach  towards a weaker version of Theorem \ref{Main1},
 which highlights a connection to the following general  interpolation-extrapolation 
theorem for strictly singular operators on $L^p$-spaces  due to
Hernandez et.al. \cite[Thm. 3.8]{HST}:  
\textit{Let $1 \le p < q \le \infty$, and 
assume that the linear operator $T$ is bounded 
$L^p \to L^p$ and $L^q \to L^q$. Moreover,
suppose further that
there is $r \in (p,q)$ for which $T \in \mathcal S(L^r)$.
Then $T \in \mathcal K(L^s)$ for all $p < s < q$.} 

To apply the above result suppose that $C_\phi \in \mathcal S(H^p)$, where
$1 < p < \infty$. Recall from Section \ref{Proof2} that the related operator
$f \mapsto \widetilde{C_\phi}(f) = f \circ \phi$ is bounded on the harmonic Hardy space 
$h^p$ for $1 < p < \infty$, and that \eqref{split} holds with respect to  
$h^p = H^p \oplus \overline{H^p_0}$ provided $\phi(0) = 0$.
It follows from  \eqref{split} that   $\widetilde{C_\phi} \in \mathcal S(h^p)$, 
since $\mathcal S(h^p)$ is a linear subspace.
Fix $q$ and $r$ such that $1 < q < p < r < \infty$. 
Since $\widetilde{C_\phi}$ is bounded $h^t \to h^t$ for any 
$t \in (1,\infty)$ and $\widetilde{C_\phi} \in \mathcal S(h^p)$, 
the above extrapolation result  applied to $h^t = L^t(\mathbb T, m)$ yields
that  $\widetilde{C_\phi} \in \mathcal K(h^s)$ for any $q < s < r$. In particular,
$C_\phi \in \mathcal K(H^s)$ for any $q < s < r$ by restricting to 
$H^s \subset h^s$. Hence we have deduced by different means the following 
weak version of Theorem \ref{Main1}: \textit{if  $C_\phi \in \mathcal S(H^p)$,  
then $C_\phi \in \mathcal K(H^p)$  for $1 < p < \infty$}.

\smallskip

Above we do not  address the technical issue 
that \cite{HST} only explicitly deals with real $L^p$-spaces,
whereas the above application requires complex scalars. 
(We are indebted to Francisco Hern\'andez for indicating that 
there is indeed also a complex version.) We leave the above alternative here as an
incomplete digression, because it is  not possible to obtain
the full strength of Theorem \ref{Main1} in this way (cf. the following example).

\begin{example}\label{extra}
We point out for completeness that the extrapolation result  
\cite[Thm. 3.8]{HST} for 
$\mathcal S(L^p) = \mathcal S_p(L^p) \cap \mathcal S_2(L^p)$ 
does not have an analogue 
for  the classes $\mathcal S_p(L^p)$ or $\mathcal S_2(L^p)$. 

In fact, let $(r_n)$ be the sequence of Rademacher functions on $[0,1]$ and 
$f \mapsto Pf = \sum_{n=1}^\infty \langle f, r_n\rangle r_n$ 
the canonical projection $L^p \to M$ for $1 < p < \infty$,
where $M = \overline{\operatorname{span}}\{r_n: n \in \mathbb N\}$.
Since  $M \approx \ell^2$ by the Khinchine inequalities, see e.g. \cite[2.b.3]{LT},
it follows that $P \in \mathcal S_p(L^p)$ by  the total incomparability of $\ell^p$ and $\ell^2$
for $p \neq 2$. 

Furthermore, the results of section \ref{Proof2} 
(in particular, see Example \ref{Ex} and  \eqref{split}) imply 
that for $p \neq 2$ there are composition operators
$\widetilde{C_\phi} \in \mathcal S_2(h^p)$ which fail to be compact.
\end{example}

\subsection{Open problems.}
Our results suggest several natural questions. 

\begin{problems}
(1) Are there results corresponding  to our main theorems 
for $C_\phi \in \mathcal L(H^p,H^q)$ in the case $p \neq q$?
Note that the conditions for boundedness and compactness 
of  $C_\phi: H^p \to H^q$ are different in the respective cases $p < q$ and $q < p$,
and they can be found in \cite{R}, \cite{J} and \cite{Sm}. For instance, for $q < p$
one has that $C_\phi \in \mathcal K(H^p,H^q)$ if and only if \eqref{CM} holds.
On the other hand,  the class $\mathcal S(L^p, L^q)$ also behaves differently
from \eqref{Weis} and \eqref{strict} for $p \neq q$.  For instance, if $p, q > 2$ and $p \neq q$,
then $\mathcal S(L^p, L^q) = \mathcal S_2(L^p, L^q)$ but $\mathcal S_p(L^p, L^q) = 
\mathcal L(L^p, L^q)$. These equalities  follow from the Kadec-Pe{\l}cynski dichotomy 
 \cite[6.4.8]{AK} and  the total incomparability of 
$\ell^p$ and $\ell^q$.

(2) Is there an analogue of Theorem \ref{Main3} for  $p = 1$?

(3) Is the converse of Proposition \ref{BMOA}.(i) also true? 

(4) Is there a Banach space $E$ of 
scalar-valued analytic functions on 
$\mathbb D$ and an analytic map $\phi: \mathbb D \to \mathbb D$, for which
$C_\phi \in  \mathcal S(E) \setminus \mathcal K(E)$? In this direction  
Lefevre et. al. 
 \cite{LLQR} found a non-reflexive
Hardy-Orlicz space $H^\psi$ so that
$C_\phi \in \mathcal W(H^\psi) \setminus \mathcal K(H^\psi)$,  
where $\phi$ is a lens map. 
\end{problems}

The approach sketched in Subsection \ref{alt} suggests that
weaker rigidity properties such as \eqref{rig}
are likely to hold for many other concrete classes of operators on $H^p$.
Subsequently  Miihkinen \cite{M} has used similar techniques as in section \ref{Proof}
to show that the dichotomy of Theorem \ref{Main1} remains valid for the 
class of analytic Volterra operators $T_g$ on $H^p$, where
\[
f \mapsto (T_g(f))(z) = \int_{0}^z f(\tau) g'(\tau) d\tau, \quad z \in \mathbb D.
\]
We refer e.g. to the surveys \cite{A} or \cite{Si} for the conditions 
on the fixed analytic map 
$g: \mathbb D \to \mathbb C$ which characterise 
the boundedness or compactness of $T_g$.

\bibliographystyle{amsalpha}

\end{document}